\title{Moduli of Singular Curves}
\author{Jack Hall}
        \newcommand{\tensor}{\otimes}
        \newcommand{\C}{\mathbb{C}}
        \newcommand{\Z}{\mathbb{Z}}
        \newcommand{\Coll}{\mathscr{C}}
        \renewcommand{\bar}{\overline}
        \newcommand{\Orb}{\mathscr{O}}
        \DeclareMathOperator{\Hom}{Hom}
        \DeclareMathOperator{\Ext}{Ext}
        \DeclareMathOperator{\Obj}{Obj}
        \newcommand{\ideal}{\triangleleft}      
        \DeclareMathOperator{\Pic}{Pic}
       \DeclareMathOperator{\spec}{Spec}
        \DeclareMathOperator{\Isom}{Isom}
        \DeclareMathOperator{\Def}{Def}
 \DeclareMathOperator{\Exal}{Exal}    
\renewcommand{\Pr}{\mathbb{P}\,}
        \theoremstyle{plain}
        \newtheorem{thm}{Theorem}[section]
        \newtheorem{cor}[thm]{Corollary}
        \newtheorem{lem}[thm]{Lemma}
        \newtheorem{prop}[thm]{Proposition}
        \theoremstyle{definition}
        \theoremstyle{remark}
\newcommand{\SCHABS}{\mathbf{Sch}}
\newcommand{\SCH}[1]{\SCHABS/{#1}}
\newcommand{\SETS}{\mathbf{Sets}}
\newcommand{\BETSITE}[1]{({#1})_{\tiny{\mbox{\'Et}}}}
\newcommand{\curv}{\mathcal{U}}
\newcommand{\bddcurv}{\mathcal{W}}
\newcommand{\HILB}[1]{\underline{\mathrm{Hilb}}_{#1}}
\newcommand{\ART}[1]{\mathbf{Art}_{#1}}
\newcommand{\OPP}{{\mathrm{opp}}}
\newcommand{\red}[1]{{#1}_{\mathrm{red}}}
\thanks{We would like to sincerely thank Jarod Alper, David Rydh,
  David Smyth, Ravi Vakil, and Fred Van Der Wyck for their comments 
  and suggestions.}
\begin{document}
\maketitle
\begin{abstract}
  The purpose of this note is to prove that there
  is an algebraic stack $\mathcal{U}$ parameterizing all
  curves. The curves that appear in the algebraic stack $\mathcal{U}$
  are allowed to be arbitrarily singular, non-reduced, disconnected,
  and reducible. We also
  prove the boundedness of the open substack of $\mathcal{U}$
  parameterizing geometrically connected curves with fixed
  arithmetic genus $g$ and $\leq e$ irreducible components. This is an  
  expanded version of \cite[Appendix
  B]{smyth-2009}.
\end{abstract}
\section{Introduction}\label{sec:intro}
Fix a scheme $S$. For an $S$-scheme $T$, a $T$-curve is defined to be a
proper, flat, and finitely presented morphism of algebraic spaces $\pi
: C \to T$, where the geometric fibers have dimension $1$. By \cite[Theorem V.4.9]{MR0302647},
\cite[Exercise III.5.8]{MR0463157} and \cite[\textbf{IV}, 9.1.5]{EGA},
the geometric fibers of a $T$-curve are projective.  Let $\SCH{S}$
denote the category of $S$-schemes and define $\curv_S$ to be the
\'etale stack over $\SCH{S}$, which assigns to each $S$-scheme $T$,
the groupoid of $T$-curves.   

It is tempting to restrict attention to $T$-curves $\pi : C \to T$,
where the map $\pi$ is \emph{projective}. Indeed, if $T$ is an
affine scheme, then any smooth $T$-curve $X \to T$ with geometric
fibers of genus $g \neq 1$ is a projective $T$-scheme. In the case that $g=1$, there is an
example due to M. Raynaud, which 
appears in \cite[XIII-3.1]{MR0260758}, of a family of
elliptic curves, over an affine base, which is Zariski locally
projective, but not projective. There is also an example due to
D. Fulghesu, appearing in \cite[Example 2.3]{fulghesu-2009}, of a
proper algebraic 3-fold, fibered over a projective surface, which is
a family of nodal curves of genus 0, with at most 2 nodes in each
fiber, which is \emph{not} a scheme. In particular, this family is
not Zariski locally projective. Thus when parameterizing
singular curves, the total spaces of the families are required to be
algebraic spaces. We will prove the following:
\begin{thm}\label{thm:stack_of_curv}
  $\curv_S$ is an algebraic stack, locally finitely presented over
  $S$, with quasi-compact and separated diagonal. There is an
  explicit, smooth cover of $\curv_S$ by Hilbert schemes of projective
  spaces. 
\end{thm}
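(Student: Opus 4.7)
The plan is to decompose the theorem into three independent pieces: (i) $\curv_S$ is a stack for the étale topology, (ii) the diagonal is representable, separated, and quasi-compact, and (iii) there exists an explicit smooth surjection to $\curv_S$ from a disjoint union of open subschemes of Hilbert schemes of projective spaces. Given (i)--(iii), the stated local finite presentation drops out of the construction, since each Hilbert scheme of a projective space is locally of finite presentation over $S$.

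Piece (i) will follow from fpqc descent: proper, flat, finitely presented morphisms of algebraic spaces satisfy fpqc descent, so a descent datum of $T'$-curves along an fpqc cover $T' \to T$ uniquely descends to a $T$-curve, and morphisms of curves descend by the analogous descent statement for Hom-sheaves. For piece (ii), I would identify the pullback of the diagonal $\curv_S \to \curv_S \times_S \curv_S$ along a pair of $T$-curves $(C_1, C_2)$ with the functor $\Isom_T(C_1, C_2)$. Artin's theorem on the representability of $\Isom$-spaces for proper, flat, finitely presented families of algebraic spaces shows that $\Isom_T(C_1, C_2)$ is representable by a separated algebraic space of finite presentation over $T$; separatedness is a consequence of properness of $C_1$ via the valuative criterion, and quasi-compactness can be obtained after stratifying the source and target by the Hilbert polynomials of the fibers.

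For the smooth cover (iii), for each integer $N \geq 0$ and each numerical polynomial $P$, let $H^P_N$ denote the open subscheme of the Hilbert scheme $\underline{\mathrm{Hilb}}^P_{\Pr^N_S/S}$ whose $T$-points parameterize flat families of closed subschemes $Z \subset \Pr^N_T$ with Hilbert polynomial $P$ such that the fibers of $Z \to T$ have pure dimension $1$; this locus is open. Define $\phi_{N,P} : H^P_N \to \curv_S$ to be the tautological morphism forgetting the embedding, and set $\phi = \bigsqcup_{N,P} \phi_{N,P}$. I would prove that $\phi$ is smooth and surjective, which in combination with (ii) formally produces the algebraic stack structure. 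Smoothness of $\phi$ is checked fiberwise: the fiber over a $T$-curve $\pi : C \to T$ parameterizes pairs consisting of a very ample line bundle $L$ on $C/T$ (of suitable numerical type) together with a surjection $\mathcal{O}_T^{N+1} \twoheadrightarrow \pi_* L$ inducing a closed immersion into $\Pr^N_T$. This fibers over the relative Picard stack $\Pic_{C/T}$, which is smooth because $H^2$ of a coherent sheaf on a curve vanishes and so deformations of line bundles are unobstructed; the further projection is an open subscheme of a vector bundle, hence also smooth.

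The main obstacle is surjectivity of $\phi$: every $T$-curve $C \to T$ must, after some étale base change $T' \to T$, admit a closed embedding into $\Pr^N_{T'}$. Over a geometric point $\bar t$ of $T$, the fiber $C_{\bar t}$ is a proper algebraic space of pure dimension $1$ over a field, hence projective by Knutson's Theorem V.4.9 cited in the introduction. A very ample line bundle on $C_{\bar t}$ must then be lifted to an étale neighborhood of $\bar t$; this lifting uses representability of the relative Picard functor of $C/T$ by an algebraic space (Artin), together with the openness of ampleness and, after passing to a sufficiently high tensor power, of very-ampleness. The examples of Raynaud and Fulghesu emphasized in the introduction show that one cannot hope for Zariski-local projectivity, so this passage through the étale topology is essential, and the proof really uses the full force of Artin's representability theorem for the Picard functor.
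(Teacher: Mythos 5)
There is a genuine gap in piece (iii): the morphism $\phi_{N,P} : H^P_N \to \curv_S$ from the full open locus of the Hilbert scheme cut out only by the pure-dimension-$1$ condition is \emph{not} smooth in general. Your fiberwise argument breaks at the second step: the space of surjections $\Orb_T^{\oplus(N+1)} \twoheadrightarrow \pi_*L$ is an open subscheme of a vector bundle only when $\pi_*L$ is locally free and commutes with arbitrary base change, i.e.\ only when $H^1(C_t, L_t) = 0$ on every fiber. Without that vanishing, the functor $T' \mapsto H^0(C_{T'}, L_{T'})^{\oplus(N+1)}$ is represented by a linear cone $\mathbf{Spec}\,\mathrm{Sym}(Q)$ for a coherent but non-locally-free $Q$, which is not smooth over the base; correspondingly $\Def_{C \subset \Pr^N} \to \Def_C$ can fail to be formally smooth (the map from the Hilbert scheme to the moduli of abstract curves is obstructed exactly where $H^1(C,\imath^*T_{\Pr^N}) \neq 0$, as in Mumford's non-reduced component of the Hilbert scheme of space curves, where the abstract deformation space is smooth but the Hilbert scheme is not). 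The fix is to shrink $H^P_N$ further to the open locus (open by semicontinuity) where $h^1(\Orb_Z(1)) = 0$ on fibers; this costs nothing for surjectivity, since any projective curve embeds with this property after a sufficient twist, and it is precisely the hypothesis under which the paper proves (via the Euler sequence and the cotangent-complex surjection $\Exal_{\Orb_{\Pr^N_A}} \to \Exal_{\Orb_A}$) that $\Def_{X\subset\Pr^N}\to\Def_X$ is formally smooth. You should also note that for a curve $X$ one automatically has $h^2(\Orb_X)=0$, which is the other hypothesis needed.

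The remaining pieces are essentially correct but take a less self-contained route than the paper. For the diagonal you invoke Artin's representability of $\Isom$-spaces for proper flat families of algebraic spaces, and for surjectivity of the cover you invoke Artin's representability of the relative Picard functor; the paper deliberately avoids this machinery, proving \'etale-local projectivity by hand over a strictly henselian base (lifting a degree-$1$ Cartier divisor from the closed fiber, then handling nilpotents via the exponential sequence and the algebraic-space case via Koll\'ar's Chevalley--Kleiman criterion), and then representing $\Isom_T(g_1,g_2)$ inside a Hilbert scheme of a product of projective spaces via graphs. Your quasi-compactness argument for $\Isom$ ``by stratifying by Hilbert polynomials'' also tacitly requires this \'etale-local projectivity first, since a non-projective family has no Hilbert polynomial; and your Picard-functor lifting argument still needs the observation that a formal lift of an ample class actually renders the total space (a priori only an algebraic space) a projective scheme over an \'etale neighborhood, which is the content of the paper's Proposition 2.1 and is not automatic.
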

We note that proofs of the algebraicity of $\curv_S$ have recently
appeared in \cite[Prop.\ 2.3]{dejong-2008} and
\cite{lundkvistthesis} using Artin's Criterion
\cite[Thm.\ 5.3]{MR0399094}. We provide a proof logically independent
of Artin's 
Criterion [\emph{loc. cit.}], by constructing an explicit
presentation by Hilbert schemes of projective spaces. Theorem 
\ref{thm:stack_of_curv} and the corollaries that follow were used by
\cite{smyth-2009} in the production of alternate compactifications of 
$\mathcal{M}_{g,n}$.  
\begin{cor}\label{cor:int_vers}
  If $C \to \spec \Bbbk$ is a projective curve, then it has a versal
  deformation space defined by equations with integral
  coefficients. 
\end{cor}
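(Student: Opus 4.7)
The plan is to apply Theorem~\ref{thm:stack_of_curv} with $S = \spec \Z$. This yields the algebraic stack $\curv_\Z$, locally of finite presentation over $\Z$, together with an explicit smooth surjective atlas $U \to \curv_\Z$ whose source $U$ is a disjoint union of Hilbert schemes $\mathrm{Hilb}^P(\mathbb{P}^N_\Z)$. Each such Hilbert scheme is projective, and in particular of finite presentation, over $\Z$, so that any affine open of $U$ has the form $\spec \Z[x_1, \ldots, x_n]/(f_1, \ldots, f_m)$ for polynomials $f_i \in \Z[x_1, \ldots, x_n]$.

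Because $C$ is projective over $\Bbbk$, one may choose a closed embedding $C \hookrightarrow \mathbb{P}^N_\Bbbk$ for some $N$. This determines a $\Bbbk$-point $h$ of the Hilbert scheme $\mathrm{Hilb}^P(\mathbb{P}^N_\Z)$, where $P$ is the Hilbert polynomial of $C$ inside $\mathbb{P}^N_\Bbbk$, and the image of $h$ in $\curv_\Z$ is precisely the moduli point of $C$. Let $h'$ denote the underlying scheme-theoretic point of the Hilbert scheme, and pick an affine open neighborhood $\spec R_0$ of $h'$ with $R_0 = \Z[x_1, \ldots, x_n]/(f_1, \ldots, f_m)$ as above.

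Since $U \to \curv_\Z$ is smooth, a suitable completion of $R_0$ (localized at $h'$ and base-changed to $\Bbbk$) serves as a versal deformation space for $C$ at $[C]$: a smooth morphism of algebraic stacks induces a formally smooth map on formal deformation rings, so the local ring of any smooth atlas provides a versal (though not necessarily miniversal) hull of the deformation functor at the image point. Since the defining polynomials $f_i$ have integer coefficients, and this property is preserved under both localization and completion, we obtain the desired presentation. The principal subtlety is in this final implication, in particular in handling the residue field extension $\kappa(h') \hookrightarrow \Bbbk$ when $\Bbbk$ is strictly larger than $\kappa(h')$; this is standard, but deserves explicit care. The remainder of the argument is a direct corollary of Theorem~\ref{thm:stack_of_curv}.
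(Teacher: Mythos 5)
Your argument is exactly the one the paper intends: the paper gives no separate proof of this corollary, remarking only in the introduction that it is a ``trivial corollary'' of Theorem~\ref{thm:stack_of_curv}, and your elaboration --- take $S=\spec \Z$, use the smooth atlas by Hilbert schemes of projective spaces over $\Z$ (cut out by integral polynomial equations), and observe that smoothness of the atlas onto $\curv_\Z$ yields versality of the completed local ring at the point corresponding to $C$ --- is the correct unpacking of that remark. The residue-field subtlety you flag is genuine but harmless at the level of precision of the statement, and does not constitute a gap relative to the paper's own treatment.
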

We observe that Corollary \ref{cor:int_vers} is a trivial corollary of
Theorem \ref{thm:stack_of_curv}, yet at face value it is entirely
non-obvious. For example, if you were to consider a complex curve $C
\to \spec \C$, with defining equations in some embedding into
$\Pr^N_\C$ having lots of transcendental terms, then you would
certainly not expect the deformation theory to be governed by
equations with integral coefficients. Since the versal deformation of
a rigid curve is itself, we immediately obtain the following partial
answer to a speculation of R. Vakil in \cite{MR2227692}: 
\begin{cor}\label{cor:int_vers2}
  If $C \to \spec \Bbbk$ is a rigid, projective curve, then every
  singularity type of $C$ is defined over $\Z$.
\end{cor}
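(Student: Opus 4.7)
My plan is to derive the corollary from Corollary~\ref{cor:int_vers} combined with the content of rigidity, following the hint in the surrounding text. Since $C$ is rigid, every infinitesimal deformation of $C$ over an Artin local $\Bbbk$-algebra is trivial, and so a versal deformation family for $C$ collapses to the constant family $C \to \spec \Bbbk$ with zero-dimensional base --- this is the precise content of ``the versal deformation of a rigid curve is itself''. Corollary~\ref{cor:int_vers} then asserts that this versal family admits a presentation by equations with integer coefficients; since the family is essentially $C$ itself, that presentation descends to a presentation of $C$ by integer-coefficient equations.

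Concretely, I would unpack this via the Hilbert scheme cover of $\curv_\Z$ from Theorem~\ref{thm:stack_of_curv}. Lifting $C$ to a closed point of such a Hilbert scheme $\mathcal{H}$ realizes $C$ as the fiber of the universal family at a point of a finite-type $\Z$-scheme. Rigidity is exactly the condition that the tangent space to $\curv_\Z$ at the point corresponding to $C$ vanishes, so that, locally around this point, all fibers of the universal family on $\mathcal{H}$ are isomorphic to $C$ (after base change of residue field). Passing to the integer-coefficient equations cutting out both the relevant point of $\mathcal{H}$ and the universal family inside $\Pr^N_\Z$, localizing at any singular point $P \in C$, and completing yields the required $\Z$-model of $\cmpl{\Orb}_{C,P}$.

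The main obstacle is the correct interpretation of ``defined over $\Z$'' for a singularity germ. The Hilbert scheme point corresponding to $C$ will in general have residue field a finitely generated extension of the prime subfield of $\Bbbk$, rather than $\Bbbk$ itself; so the integer-coefficient presentation one obtains is a priori for a singularity defined over this finitely generated field. Verifying that this qualifies as ``defined over $\Z$'' in the sense of Vakil's speculation amounts to observing that singularity types are taken up to extension of the residue field, and it is exactly the rigidity of $C$ that eliminates any genuine transcendental dependence in the defining equations, leaving only the $\Z$-presentation coming from $\mathcal{H}$.
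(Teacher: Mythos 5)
Your proposal follows exactly the paper's route: the paper's entire proof of this corollary is the one-line remark preceding its statement, namely that Corollary \ref{cor:int_vers} applies and the versal deformation of a rigid curve is the curve itself, so the integral presentation of the versal family is an integral presentation of $C$ and hence of each of its singularity germs. Your unpacking via the Hilbert-scheme cover of $\curv_{\Z}$ from Theorem \ref{thm:stack_of_curv}, and in particular your observation that one must use rigidity to pass from the finitely generated residue field of the relevant Hilbert-scheme point down to $\Z$, supplies strictly more detail than the paper does, so this is the same argument carried out more carefully.
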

The following corollaries show that from the construction of the
algebraic stack $\curv_S$, one easily obtains
fine moduli stacks of essentially every other moduli problem
associated to curves.
\begin{cor}\label{cor:stack_of_curv1}
  The stack $\curv_{S,n}$ whose objects are curves + $n$ arbitrary
  sections is algebraic, locally finitely presented over $S$, with
  quasi-compact and separated diagonal. 
\end{cor}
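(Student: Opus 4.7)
The plan is to identify $\curv_{S,n}$ as an iterated fiber product of the universal curve over $\curv_S$, and then deduce each required property directly from Theorem~\ref{thm:stack_of_curv}.

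First, I would introduce the universal $\curv_S$-curve $\pi : \mathcal{C}_{\mathrm{univ}} \to \curv_S$: for any $T \to \curv_S$ classifying a $T$-curve $C \to T$, the base change of $\pi$ along $T \to \curv_S$ recovers the tautological family $C \to T$. In particular $\pi$ is representable by algebraic spaces and proper, flat, and of finite presentation, so since $\curv_S$ is algebraic by Theorem~\ref{thm:stack_of_curv}, the stack $\mathcal{C}_{\mathrm{univ}}$ is algebraic as well. Now a $T$-point of $\mathcal{C}_{\mathrm{univ}}$ is precisely a $T$-curve together with a single section, so specifying $n$ sections of a $T$-curve is the same as specifying $n$ lifts of $T \to \curv_S$ to $\mathcal{C}_{\mathrm{univ}}$. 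This yields a canonical equivalence
\[
\curv_{S,n} \,\iso\, \underbrace{\mathcal{C}_{\mathrm{univ}} \times_{\curv_S} \cdots \times_{\curv_S} \mathcal{C}_{\mathrm{univ}}}_{n \text{ copies}}.
\]

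As a finite fiber product over $\curv_S$ of stacks that are algebraic and locally finitely presented over $\curv_S$, the stack $\curv_{S,n}$ is itself algebraic and locally finitely presented over $\curv_S$, and hence over $S$.

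Finally, for the diagonal condition I would factor
\[
\Delta_{\curv_{S,n}/S} \colon \curv_{S,n} \longrightarrow \curv_{S,n} \times_{\curv_S} \curv_{S,n} \longrightarrow \curv_{S,n} \times_S \curv_{S,n}.
\]
The second arrow is the base change of $\Delta_{\curv_S/S}$, which is quasi-compact and separated by Theorem~\ref{thm:stack_of_curv}. The first arrow is the relative diagonal of $\curv_{S,n} \to \curv_S$; since this morphism is built by iterated fiber product from the proper representable map $\mathcal{C}_{\mathrm{univ}} \to \curv_S$, it is itself proper and in particular separated, so its diagonal is a closed immersion. Composing, $\Delta_{\curv_{S,n}/S}$ is quasi-compact and separated. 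The only real conceptual step is recognizing the fiber-product presentation of $\curv_{S,n}$; once that is in place, the argument is purely formal and presents no serious obstacle.
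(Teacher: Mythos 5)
Your proposal is correct and takes essentially the same route as the paper: the paper likewise identifies $\curv_{S,1}$ with the universal curve over $\curv_S$ (representable, since $T\times_{\curv_S}\curv_{S,1}\cong C$ for a $T$-curve $C\to T$) and writes $\curv_{S,n}$ as the $n$-fold fiber product over $\curv_S$. Your explicit factorization of the diagonal through $\curv_{S,n}\times_{\curv_S}\curv_{S,n}$ just spells out a step the paper leaves implicit.
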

\begin{cor}\label{cor:stack_of_curv2}
  One may impose any number of the following extra conditions on the
  morphisms in $\Obj \Coll_S$ and still obtain an algebraic $S$-stack
  which is locally finitely presented over $S$, with quasi-compact and
  separated diagonal:
  \begin{enumerate}
  \item geometric fibers are $R_n$;
  \item geometric fibers are $S_n$;
  \item geometric fibers are lci;
  \item geometric fibers are Cohen-Macaulay;
  \item geometric fibers are reduced with $k$ connected components;
  \item geometric fibers are reduced;
  \item geometric fibers are reduced and connected;
  \item geometric fibers are reduced, connected, and have $e$ or fewer 
    irreducible components;
  \item geometric fibers are integral;
  \item geometric fibers have arithmetic genus $g$;
  \item geometric fibers have quasi-finite automorphism group;
  \item geometric fibers have no infinitesimal automorphisms;
  \item any condition on a flat family of curves specified by a
    condition on an open function (e.g. a polynomial) in the
    cohomology groups on the fibers of a finite set of complexes of
    sheaves with coherent cohomology (possibly not flat), all of which
    respect pullback along the base (for example $h^1(L_{X/T})=3$).  
  \end{enumerate}
  In particular, (11) defines the largest substack of $\curv$ with
  quasi-finite diagonal, and (12) defines the largest Deligne-Mumford
  substack. 
\end{cor}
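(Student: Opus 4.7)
The overall strategy is to exhibit each of conditions (1)--(13) as defining an open or locally closed substack of $\curv_S$, and then appeal to Theorem \ref{thm:stack_of_curv} together with the fact that open and locally closed substacks of an algebraic stack inherit local finite presentation over $S$ and the quasi-compactness and separatedness of the diagonal. The ``largest substack'' statements at the end will follow from the standard translations between properties of a stack's diagonal and properties of its geometric automorphism group schemes.

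For a $T$-curve $\pi : C \to T$, the $R_n$, $S_n$, Cohen--Macaulay, lci, and reduced loci (conditions (1)--(4) and (6)) are open on $T$ by the classical fibrewise openness results of \cite[\textbf{IV}, \S\S9,12]{EGA} applied to the flat and finitely presented morphism $\pi$. Condition (10) is open-and-closed on $T$ because the Euler characteristic $\chi(\mathcal{O}_{C_t})$ is locally constant by flatness and cohomology-and-base-change, and condition (13) is the prototype of this phenomenon: any condition obtained by imposing an open polynomial relation on the fibrewise dimensions of the cohomology sheaves of a perfect complex cuts out an open subset of $T$. For (5), (7), (8), and (9) one first restricts to the open reduced locus of (6); there the Stein factorization $C \to T' \to T$ has $T' \to T$ finite \'etale, so the number of geometric connected components is locally constant, and the number of geometric irreducible components is upper semicontinuous, so ``$\leq e$ irreducible components'' is an open condition.

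For (11) and (12) we use that, by Theorem \ref{thm:stack_of_curv}, the diagonal of $\curv_S$ is representable, quasi-compact, separated, and locally of finite presentation. Consequently for any $T$-curve the automorphism algebraic space $\underline{\Isom}_T(C,C) \to T$ is of finite presentation and its formation commutes with base change. Quasi-finiteness of a morphism of finite presentation is an open condition on the target, yielding (11); and the locus where the Lie algebra at the identity section vanishes fibrewise --- equivalently, where there are no infinitesimal automorphisms --- is open by upper semicontinuity of the dimension of this coherent sheaf on $T$.

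The main obstacle is condition (8), which requires upper semicontinuity of the number of geometric irreducible components in a flat family of reduced proper curves. The cleanest approach is to pass to the normalization $\widetilde{C} \to C$, which over a suitable \'etale cover of the reduced locus is again a flat family of smooth curves, and then apply the Stein factorization argument to count components of $\widetilde{C}_{\bar t}$; alternatively one may pull back through the smooth presentation of $\curv_S$ by Hilbert schemes of projective spaces supplied by Theorem \ref{thm:stack_of_curv}, where the locus of families with at most $e$ irreducible components in each fibre is constructible and, on the reduced stratum, open. Once this is in hand, all remaining substacks are obtained by intersecting the open (and open-and-closed) substacks already produced, and the corollary follows.
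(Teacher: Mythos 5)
Your overall strategy coincides with the paper's: each condition is shown to cut out an open (or, for part of (13), locally closed) substack of $\curv_S$, which then inherits algebraicity, local finite presentation, and the diagonal properties from Theorem \ref{thm:stack_of_curv}. Most of your individual cases are fine, but two of your justifications contain genuine errors. For (11), the assertion that quasi-finiteness of a finitely presented morphism is an open condition \emph{on the target} is false in general: the locus of $t$ with finite fiber is only constructible (consider $\Aff^2\setminus\{(0,y):y\neq 0\}\to\Aff^1$, whose quasi-finite locus in the target is the non-open set $\{0\}$). What saves the day is precisely the group structure: the fiber dimension of $\Isom_T(C,C)_t$ equals its dimension along the identity section by translation-homogeneity, and Chevalley's semicontinuity theorem applied along that section makes $t\mapsto\dim \Isom_T(C,C)_t$ upper semicontinuous on $T$. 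This is the paper's argument, and your argument for (12) via $e^*\Omega_{G/T}$ is the correct analogue; you need the same device for (11).

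For (8), your first proposed route is broken: the normalization $\widetilde{C}\to C$ of the total space is in general neither flat over $T$ nor compatible with passage to fibers (the fiber of $\widetilde{C}$ over $t$ need not be the normalization of $C_t$ unless the family is equisingular), so you cannot count components of $C_{\bar t}$ by a Stein factorization of $\widetilde{C}\to T$. Your fallback (``constructible and, on the reduced stratum, open'') is an assertion of exactly what needs proving. The paper simply invokes the fibral openness results of \cite[\textbf{IV}, \S 12.2]{EGA}, which do give upper semicontinuity of the number of geometric irreducible components for a proper, flat, finitely presented family with geometrically reduced fibers; if you want a self-contained argument you should argue via multiplicities of components of the total space over a DVR rather than via normalization. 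Two smaller omissions: in (13) the statement explicitly allows complexes with non-flat cohomology, for which cohomology-and-base-change does not apply directly; the paper passes to a flattening stratification (available after reducing, via the limit-preserving property and Proposition \ref{prop:Fppfprojective}, to a projective family over a noetherian base) and obtains only a \emph{locally closed} substack there. You should also make that noetherian/projective reduction explicit, since Chevalley semicontinuity and flattening stratifications are being applied in cases (10)--(13).
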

We also prove the following boundedness result, such a result was
believed to exist, but there was no proof in the literature. 
\begin{cor}\label{cor:stack_of_curv3}
  For any fixed triple of integers $(g,n,e)$, the stack
  $\bddcurv_{S,g,n,e}$ corresponding to geometrically connected,
  reduced curves of arithmetic genus $g$, with $n$ marked points, and
  $e$ or fewer irreducible components is algebraic, \emph{finitely}
  presented over $S$, with quasi-compact and separated diagonal.
\end{cor}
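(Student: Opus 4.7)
The plan is to leverage Corollary~\ref{cor:stack_of_curv2} for everything except quasi-compactness, and then to upgrade local finite presentation to finite presentation by proving an explicit boundedness statement. Applying Corollary~\ref{cor:stack_of_curv2} with conditions (6), (7), (8), and (10) imposed simultaneously yields an algebraic substack of $\curv_S$ that is locally finitely presented over $S$ with quasi-compact and separated diagonal; combining this with the $n$ universal sections supplied by Corollary~\ref{cor:stack_of_curv1} gives $\bddcurv_{S,g,n,e}$ with the same three properties. Since locally finitely presented plus quasi-compact equals finitely presented, it remains only to establish that $\bddcurv_{S,g,n,e}$ is quasi-compact over $S$.

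By Theorem~\ref{thm:stack_of_curv}, $\curv_S$ is smoothly covered by Hilbert schemes of projective spaces indexed by pairs $(N,P)$, so quasi-compactness of $\bddcurv_{S,g,n,e}$ will follow once I show that only finitely many such pairs meet its image. Concretely, I need the following boundedness claim: there exist an integer $N = N(g,e)$ and a polynomial $P = P(g,e)$ such that every geometrically connected, reduced curve $C$ of arithmetic genus $g$ with at most $e$ irreducible components, over any algebraically closed field $\Bbbk$, admits a closed immersion into $\mathbb{P}^N_\Bbbk$ with Hilbert polynomial $P$. Granted this, the finitely many relevant Hilbert schemes $\mathrm{Hilb}_P(\mathbb{P}^N_S)$, together with the $n$ sections (which amount to forming products of universal curves), furnish a finitely presented scheme surjecting smoothly onto $\bddcurv_{S,g,n,e}$.

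To prove the boundedness claim I would analyze the normalization $\pi \colon \tilde{C} \to C$. The conductor sequence $0 \to \mathcal{O}_C \to \pi_* \mathcal{O}_{\tilde{C}} \to Q \to 0$ forces $\tilde{g} + \delta = g + e - 1$, where $\tilde{g} = h^1(\mathcal{O}_{\tilde{C}})$ and $\delta$ is the total length of $Q$; in particular both $\tilde{g}$ and $\delta$ are bounded by $g + e - 1$. Selecting one smooth point on each irreducible component of $C$ (always possible because $C$ is reduced) and letting $D$ be their sum produces a line bundle $\mathcal{O}_C(D)$ of degree one on each component, and a Castelnuovo--Mumford style argument using Serre duality together with the bound $\deg \omega_C = 2g - 2$ yields a uniform $m = m(g,e)$ for which $\mathcal{O}_C(mD)$ is very ample with Hilbert polynomial controlled by $g$ and $e$. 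The principal obstacle is executing this very-ampleness step uniformly in the presence of non-Gorenstein singularities, where $\omega_C$ fails to be locally free and the tricanonical embedding is unavailable. The way around this is that $D$ is supported at smooth points, so $\mathcal{O}_C(mD)$ is unambiguously a line bundle, and the $H^1$-contributions needed for the regularity estimate are controlled by the bounded $\delta$-invariant and $\tilde{g}$ rather than by any finer invariants of the singularities; assembling the resulting finitely many Hilbert schemes then completes the proof.
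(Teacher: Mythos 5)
Your proposal follows essentially the same route as the paper: reduce to quasi-compactness via Corollaries \ref{cor:stack_of_curv1} and \ref{cor:stack_of_curv2}, then prove boundedness by taking one smooth point on each irreducible component, bounding the total $\delta$-invariant by $g+e-1$ via the normalization, and showing that a uniform power $m(g,e)$ of the resulting degree-$e$ line bundle is very ample, so that finitely many Hilbert schemes surject onto the stack (this is exactly the paper's Theorem \ref{thm:bd_deg}). The only cosmetic differences are that the paper carries out the $H^1$-vanishing by an explicit comparison with $\pi_*\mathcal{O}_{\widetilde{C}}$ rather than by Serre duality, and adds a secant/tangent-variety projection to shrink $N_{g,e}$ to $(g+e)^2+1$ (not strictly needed for boundedness, since Riemann--Roch already bounds $h^0(\mathscr{L}^m)$ once $H^1$ vanishes); note also that your identity $\widetilde{g}+\delta = g+e-1$ should be an inequality $\leq$ unless $C$ has exactly $e$ components.
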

Theorem \ref{thm:stack_of_curv} and its corollaries will be proved in
the subsequent sections. 
\section{\'Etale local projectivity}
We will show that a $T$-curve $C \to T$ is \'etale-locally
projective. Note that this result is an immediate consequence of Artin
approximation, but we provide an independent proof.  
An important preliminary observation is that
$\curv_S$ is a \emph{limit preserving} stack. That is, if
$\{A_j\}_{j\in J}$ is an inductive sequence of $S$-rings, we set $A
=\varinjlim_j A_j$, then the natural 
transformation: 
\[
\varinjlim_j (\curv_S)_{\spec A_j} \longrightarrow (\curv_S)_{\spec A}
\]
is an equivalence of categories. In concrete terms, it means that if
you have a $\spec A$-curve $X \to \spec A$, there is some $j \in J$
and a $\spec A_j$-curve $X_j \to \spec A_j$ such that
$X_j\tensor_{A_j} A \to X$ is an isomorphism and that for any
isomorphism of $\spec A$-curves $X \to Y$. Moreover, there is a $k \in J$ and
$\spec A_k$-curves $X_k$, $Y_k$ together with an isomorphism of $\spec
A_k$-curves $X_k \to Y_k$ such that this pulls back to the isomorphism
of $\spec A$-curves $X \to Y$. This is a somewhat technical condition
to verify, but it is very useful in the sense that it means the
resulting moduli stack is locally of finite presentation, and it
allows one to usually reduce arguments to the noetherian (even
excellent) case.{} 

The proof that $\curv_S$ is limit preserving is standard, we will
merely provide the references sufficient to prove the 
result. To obtain essential surjectivity, combine one of the
reductions used in the proof of  
\cite[Prop.\ 4.18]{MR1771927} with \cite[\textbf{IV},
4.1.4]{EGA} 
(for the dimension of fibers), \cite[\textbf{IV}, 8.10.5(xii)]{EGA}
and \cite[Thm.\ IV.3.1]{MR0302647} (for the properness),  and
\cite[\textbf{IV}, 11.2.6]{EGA} (for the flatness).  The techniques
of \cite[\textbf{IV}, 8.8.2.5]{EGA} garner full faithfulness. 
\begin{prop}\label{prop:Fppfprojective}
  Let $\pi : C \to S$ be a proper, finitely presented morphism of algebraic
  spaces. Let $s\in S$ be a closed point such that $\dim_{\kappa(s)}
  C_s \leq 1$, then there is an \'etale neighbourhood 
  $(U,\bar{u})$ of $(S,\bar{s})$ such that $C\times_S U \to U$ is
  projective. 
\end{prop}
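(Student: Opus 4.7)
The plan is to produce, after an étale base change near $s$, a relatively ample line bundle on $C\to S$; since a proper algebraic space admitting a relatively ample line bundle is a projective scheme, this will yield the claim. I would first reduce to the case of $S$ affine and noetherian by the standard limit argument (using that $\pi$ is finitely presented). The fiber $C_s$ is then a proper algebraic space of dimension $\le 1$ over $\kappa(s)$, hence automatically a projective scheme, so I may choose an ample line bundle $\mathcal{L}_s$ on $C_s$.

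I would then lift $\mathcal{L}_s$ through the infinitesimal thickenings $C_n:=C\times_S\spec(\mathcal{O}_{S,s}/\mathfrak{m}_s^{n+1})$. The obstruction to extending $\mathcal{L}_n$ to $C_{n+1}$ lies in $H^2(C_s,\mathcal{O}_{C_s})\otimes_{\kappa(s)}\mathfrak{m}_s^{n+1}/\mathfrak{m}_s^{n+2}$, which vanishes by Grothendieck vanishing since $\dim C_s\le 1$. The resulting compatible system $(\mathcal{L}_n)$ algebraizes, by Grothendieck's existence theorem for proper morphisms of algebraic spaces, to a line bundle $\widehat{\mathcal{L}}$ on $\widehat{C}:=C\times_S\spec\widehat{\mathcal{O}}_{S,s}$. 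Since ampleness is open in proper flat families (EGA IV, 9.6.4), $\widehat{\mathcal{L}}$ is relatively ample, producing a closed immersion $\widehat{C}\hookrightarrow\mathbb{P}^N_{\widehat{\mathcal{O}}_{S,s}}$ for some $N$.

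The last step is to descend this embedding to an étale neighborhood. It corresponds to a morphism $\spec\widehat{\mathcal{O}}_{S,s}\to\mathrm{Hilb}_{\mathbb{P}^N_S/S}$, and the Hilbert scheme is locally of finite presentation over $S$. Artin approximation then yields an étale neighborhood $(U,\bar u)\to(S,\bar s)$ and a closed immersion $C\times_S U\hookrightarrow\mathbb{P}^N_U$ whose fiber over $\bar u$ agrees with the given embedding of $C_s$; openness of ampleness then permits shrinking $U$ so that the resulting morphism $C\times_S U\to U$ is projective.

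The crux of this approach is the final descent, which is essentially the content of Artin approximation. The introduction alludes to a proof independent of Artin approximation; such an argument would have to replace this last step by a more direct construction, perhaps by exploiting the Hom-scheme $\underline{\mathrm{Hom}}_S(C,\mathbb{P}^N_S)$ and its representability properties inherited from the Hilbert scheme, together with a direct algebraization of the formal closed immersion. The remaining ingredients---projectivity of proper $1$-dimensional spaces, vanishing of the lifting obstructions, formal GAGA for algebraic spaces, and openness of ampleness---are standard.
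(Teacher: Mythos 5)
Your argument is, in outline, exactly the ``immediate consequence of Artin approximation'' that the paper mentions just before the proposition and then deliberately declines to use: the stated purpose of that section is to give a proof \emph{independent} of approximation. The paper instead reduces to an excellent \emph{strictly henselian} local base (no completion) and builds the ample bundle by hand: for $C$ a reduced scheme it lifts $\Orb_{C_s}(-x_s)$ to a Cartier divisor on $C$ by lifting a local equation and using the henselian decomposition of a quasi-finite separated map into a finite piece plus a remainder, then invokes [EGA IV, 21.3.4] and [EGA III, 4.7.1]; for $C$ non-reduced it climbs the finite nilradical filtration using the exponential sequence and $H^2=0$ (the same vanishing you use, but only finitely many steps are needed, so no algebraization is required); and for $C$ an algebraic space it takes a finite surjective scheme cover, which is projective by the previous cases, and applies Koll\'ar's descent of the Chevalley--Kleiman property to conclude that $C$ is a scheme. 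Your route trades all of this for Grothendieck existence over $\widehat{\Orb}_{S,s}$ together with Artin approximation; it is shorter given those theorems, but it is not the logically independent argument the paper is after.

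There is also one genuine slip in your final descent: the proposition does not assume $\pi$ flat, so the closed immersion $\widehat{C}\hookrightarrow \Pr^N_{\widehat{\Orb}_{S,s}}$ need not define a point of $\mathrm{Hilb}_{\Pr^N_S/S}$, whose $T$-points are by definition closed subschemes \emph{flat} over $T$ (your obstruction group $H^2(C_s,\Orb_{C_s})\otimes\mathfrak{m}^{n+1}/\mathfrak{m}^{n+2}$ likewise tacitly assumes flatness, though the vanishing survives since the relevant ideal sheaf is in any case a coherent sheaf on the $1$-dimensional $C_s$). To repair the descent, approximate a limit-preserving functor that actually captures your datum --- for instance $T\mapsto\{\text{closed immersions } C\times_S T\hookrightarrow\Pr^N_T\}$, or $T\mapsto\Pic(C\times_S T)$ followed by [EGA III, 4.7.1] (which needs no flatness, unlike the flat-family openness statement you cite). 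With that replacement your argument goes through.
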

\begin{proof}
  The statement is local on $S$ for the \'etale topology and by
  standard limit methods, we reduce immediately to the following
  situation: $S = \spec R$, where $R$ is an excellent, strictly henselian local
  ring and $s \in S$ is the unique closed point.{}

  First, assume that $C$ is a reduced scheme. Now, let $C_s\to s$
  denote the special fiber of $C\to S$. Since $C_s$ is a proper 
  scheme of dimension 1 over a field, it is manifestly
  projective. Thus, it suffices to show that the map $\Pic(C) \to \Pic(C_s)$ is
  surjective. Indeed, one can then conclude that $C$ admits a line bundle
  $\mathscr{L}$ such that the restriction to the central fiber is
  projective. By \cite[\textbf{III},
  4.7.1]{EGA}, we deduce that $\mathscr{L}$ is ample. 
  
  For this paragraph we utilize the arguments in \cite[Prop.\
  IV.4.1]{MR0463174}. Let $\mathscr{L}_s$ be a line bundle on
  $C_s$. Since $C_s \to \spec 
  \kappa(s)$ is a projective curve, to show that $\Pic(C) \to
  \Pic(C_s)$ is surjective, it suffices to treat the case
  where $\mathscr{L}_s = \Orb_{C_s}(-x_s)$, for some closed point $x_s \in
  C_s$. In an open neighborhood $U_s$ of $x_s\in C_s$, we have that
  $x_s = V(f_s) \cap U_s$, for some $f_s \in \Orb_{C_s}(U_s)$ which is
  not a $0$-divisor. Since $C_s \hookrightarrow C$ is a closed
  immersion, there is an open subscheme $U \subset C$ such that $U
  \cap C_0 = U_0$. By shrinking $U$, we may lift the equation $f_s \in
  \Orb_{C_s}(U_s)$ to $f\in \Orb_C(U)$ such that $f$ is not a
  $0$-divisor, and $V(f)\cap U \cap C_s = \{x_s\}$. In particular,
  the map $V(f)\cap U \to S$ is quasi-finite and separated. Since
  $S$ is local and strictly henselian, by \cite[\textbf{IV},
  18.12.3]{EGA}, there is a decomposition $V(f) \cap U = V_1 \amalg
  V_2$, where $V_1 \to S$ is \emph{finite} and contains
  $\pi^{-1}(s)$. Thus, by further shrinking $U$, we may assume that
  the map $V(f) \cap U \to S$ is finite. On $C$ we may now define an
  effective cartier divisor $D$ as $D\mid_{C-[V(f)\cap
    U]} = 0$ and $D\mid_U = \mathrm{div}\,f$. The cartier divisor
  $\Orb(-D)$ has the property that $\Orb_{C_s}(-D) =
  \Orb_{C_s}(-x_s)$. Since $C$ is reduced and noetherian, by
  \cite[\textbf{IV}, 21.3.4]{EGA}, $\Pic(C) \to \Pic(C_s)$ is
  surjective. 

  If $C$ is a non-reduced scheme, and $\red{C}$ is the reduction, then
  we have shown that the morphism $\red{C} \to S$ is projective. Since
  $C$ is noetherian, if $\mathscr{I}$ denotes the 
  nilradical of $C$, then there is a $k$ such that $\mathscr{I}^k
  =(0)$. Thus, it suffices to prove the following: if $i : C' \to C$ is a closed 
  immersion over $S$, defined by a square 0 ideal
  $\mathscr{J}$ such that $C'$ is projective, then $C$ is
  projective. To this end, we recall the exponential sequence on $C$:
  \[
  \xymatrix{0 \ar[r] & \mathscr{J} \ar[rr]^{a \mapsto 1 + a} & &
    \Orb_C^\times \ar[r] &  i_*\Orb_{C'}^\times \ar[r] & 1}.
  \]
  By taking the long exact sequence of cohomology, we see that the
  obstruction to lifting a line bundle on $C'$ to a line bundle on $C$
  lies in the cohomology group $H^2(C',\mathscr{J})$. Since, $C'$ is a
  projective $S$-curve, we have that $H^2(C',\mathscr{J}) = 0$. Consequently, we  
  deduce that $\Pic C \to \Pic C'$ is surjective. Hence, we may lift
  an ample bundle on $C'$ to a line bundle on $C$, and any such lift must be
  ample. 

  We now treat the case where $C$ is an algebraic space, and it
  remains to show that it is a scheme. By \cite[Thm.\ 16.6]{MR1771927}, there is a finite and surjective $S$-map  
  $\widetilde{C} \to C$, where $\widetilde{C}$ is a scheme. Since
  $\widetilde{C}$ is a proper $S$-scheme, with special 
  fiber of dimension $\leq 1$, we may conclude that $\widetilde{C}$ is
  a projective $S$-scheme. In particular, $\widetilde{C}$ has the
  Chevalley-Kleiman property (i.e. every finite set of points is contained
  in an open affine). Since $S$ is excellent,
  we may apply \cite[Cor. 48]{kollar-2008} to conclude that
  $C$ has the Chevalley-Kleiman property, thus is a scheme.
\end{proof}     
\section{Representability of The Diagonal}\label{sec:diagonal}
In this section, we will prove that the diagonal morphism $\Delta :
\curv_S \to \curv_S \times_S \curv_S$ is representable, locally of finite
presentation, separated and quasicompact. M. Artin, in \cite{MR0399094},
calls this \emph{relative representability} and as we will see, it is
an essential and natural part of the proof of algebraicity of $\curv_S$. 
Fix an $S$-scheme $T$ and let $g_1 : C_1 \to T$, $g_2 : C_2 \to T$ be
two $T$-curves. We form the 2-cartesian diagram:
\[
\xymatrix{T\times_{s_1,\curv_S,s_2} T \ar[r] \ar[d] & T \times_S T
  \ar[d]^{(s_1,s_2)}\\ \curv_S \ar[r]_{\Delta} & \curv_S \times_S \curv_S},
\]
where the $s_i$ are the induced maps to $\curv_S$ defined by the
$T$-curve $g_i$. The 2-fiber product,
$T\times_{s_1,\curv_S,s_2} T$, is isomorphic to the 
$\BETSITE{\SCH{T}}$-sheaf of isomorphisms $\Isom_T(g_1,g_2)$. That is,
the sections over a $T$-scheme $\phi : T' \to T$ are $T'$-isomorphisms
$f : \phi^*g_1 \to \phi^*g_2$. {}

To prove that $\Delta$ is representable, quasi-compact and
separated, we must show that the sheaf $\Isom_T(g_1,g_2)$ is
an algebraic space which is quasi-compact
and separated over $T$. Also, there is a $\BETSITE{\SCH{T}}$-sheaf 
$\Hom_T(g_1,g_2)$ whose sections over a morphism
$\phi :T'\to T$ are the $T'$-morphisms $f : \phi^*g_1 \to
\phi^*g_2$. One observes that $\Isom_T(g_1,g_2)$ is a subsheaf
of $\Hom_T(g_1,g_2)$.{}  

We recall the definition of the Hilbert functor for a $T$-scheme $X
\to T$: let $T' \to T$ be a morphism of schemes, let $\HILB{X/T}(T')$
be the set of isomorphism classes of closed subschemes $Z \to X\times_T
T'$ which are flat, proper, and finitely presented over $T'$. Clearly,
$\HILB{X/T} : \BETSITE{\SCH{T}} \to \SETS$ is a sheaf.{}

There is a natural transformation $\Gamma :
\Hom_T(g_1,g_2) \to \HILB{C_1\times_T C_2/T}$ which
associates to any $T'$-morphism $f : C_1\times_T T' \to C_2\times_T T'$
its graph $\Gamma_f$.
\begin{lem}\label{C:RepDiagonalStep}\label{C:RepDiagonal}
Suppose that $g_1 : C_1 \to T$, $g_2 : C_2 \to T$ are objects of
$\Coll_S$, then the $\BETSITE{\SCH{T}}$-sheaves $\Hom_T(g_1,g_2)$ and
$\Isom_{T}(g_1,g_2)$ are both representable by finitely presented and
separated algebraic $T$-spaces. 
\end{lem}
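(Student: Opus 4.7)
The plan is to realize $\Hom_T(g_1,g_2)$ as an open subsheaf of $\HILB{C_1\times_T C_2/T}$ via the graph transformation $\Gamma$, and then to realize $\Isom_T(g_1,g_2)$ as an open subsheaf of $\Hom_T(g_1,g_2)$. This reduces the entire claim to three ingredients: (a) representability of $\HILB{C_1\times_T C_2/T}$ by an algebraic $T$-space locally of finite presentation, (b) openness of the ``graph'' condition inside the Hilbert sheaf, and (c) openness of the ``isomorphism'' condition inside $\Hom_T$.

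First I would establish (a). Since $g_1$ and $g_2$ are proper and finitely presented with $1$-dimensional geometric fibers, $q: C_1\times_T C_2 \to T$ is proper and finitely presented, with geometric fibers of dimension at most $2$. Proposition \ref{prop:Fppfprojective}, applied at each closed point of $T$, supplies an \'etale cover $\{U_\alpha \to T\}$ over which $C_1\times_T U_\alpha$ and $C_2\times_T U_\alpha$ are projective $U_\alpha$-schemes; composing these embeddings with the Segre embedding, $q$ becomes projective after base change along each $U_\alpha \to T$. Grothendieck's classical theorem then represents $\HILB{(C_1\times_T C_2)\times_T U_\alpha/U_\alpha}$ as a disjoint union of projective $U_\alpha$-schemes indexed by Hilbert polynomials. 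Because the Hilbert functor commutes with base change and is an \'etale sheaf, effective descent for algebraic spaces glues these into the required algebraic $T$-space, locally of finite presentation.

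Next I would verify (b) and (c). A flat, proper, finitely presented closed subspace $Z \hookrightarrow (C_1\times_T C_2)\times_T T'$ is the graph of a $T'$-morphism $C_1\times_T T' \to C_2\times_T T'$ exactly when the first projection $p_1: Z \to C_1\times_T T'$ is an isomorphism. Since both source and target are proper, flat, and finitely presented over $T'$, the locus in $T'$ on which $p_1$ is an isomorphism is open: check fibrewise using semi-continuity of fibre dimension for proper morphisms together with Nakayama applied to the coherent sheaf $p_{1,*}\Orb_Z$. Exactly the same reasoning identifies $\Isom_T(g_1,g_2)$ as the open subfunctor of $\Hom_T(g_1,g_2)$ cut out by the additional requirement that the second projection $p_2: \Gamma_f \to C_2\times_T T'$ also be an isomorphism.

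Separation follows from the valuative criterion: two $T'$-morphisms between $T'$-flat proper families that agree on a schematically dense open subset of $T'$ must agree, because their graphs are closed subspaces of the separated $T'$-scheme $C_1\times_T C_2\times_T T'$ and $C_1\to T$ is flat. For the finite presentation statement on $\Isom_T$, note that an isomorphism $f$ forces $\Gamma_f \cong C_1$ via $p_1$, so after choosing an \'etale-local polarization the Hilbert polynomial of $\Gamma_f$ is fixed, confining $\Isom_T$ to a quasi-compact union of Hilbert-scheme components; $\Hom_T$ is only locally of finite presentation since different $f$ can give different Hilbert polynomials. The main obstacle is the \'etale-descent step for the Hilbert functor of an algebraic-space fibration: it is exactly Proposition \ref{prop:Fppfprojective} that unlocks the classical projective theory, after which the openness and separation arguments are standard.
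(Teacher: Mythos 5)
Your overall construction---reduce \'etale-locally to the projective case via Proposition \ref{prop:Fppfprojective} (with the Segre embedding to handle the product), invoke Grothendieck's representability of the Hilbert functor there, cut out the graph and isomorphism loci as open subfunctors, and descend to an algebraic $T$-space---is exactly the route the paper takes, and those steps are fine. The genuine gap is the \emph{finite presentation} of $\Hom_T(g_1,g_2)$: the lemma asserts that both sheaves are finitely presented over $T$, which includes quasi-compactness, and you explicitly concede that $\Hom_T$ is ``only locally of finite presentation since different $f$ can give different Hilbert polynomials.'' As written, then, your argument does not establish the stated result for $\Hom_T$. The paper closes this by a Snapper's-theorem computation (borrowed from de Jong): setting $\mathscr{L}=p_1^*\mathscr{L}_1\otimes p_2^*\mathscr{L}_2$, it computes that the Hilbert polynomial of the graph of \emph{any} $T'$-morphism $f$ equals $P_1+P_2+P_1(0)$, independently of $f$, so that $\Hom_T(g_1,g_2)$ factors through a single projective component of the Hilbert scheme and is therefore quasi-compact.

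That said, your hesitation is not frivolous, and you should confront it rather than wave at it. The paper's computation rests on the identity $\chi(C_1',f^*\psi_2^*\mathscr{L}_2^{\otimes n})=\chi(C_2',\psi_2^*\mathscr{L}_2^{\otimes n})$, which is automatic when $f$ is an isomorphism but is not obvious (and needs justification) for an arbitrary morphism $f$, since the degree of $f^*\mathscr{L}_2$ on $C_1'$ depends on $f$. For the intended application---representability and quasi-compactness of the diagonal of $\curv_S$---only $\Isom_T(g_1,g_2)$ is needed, and there your fixed-Hilbert-polynomial argument agrees with the paper's and is correct. But to prove the lemma as stated you must either supply a boundedness argument for $\Hom_T$ along the lines of the Snapper computation, or explain why no such bound exists and weaken the claim for $\Hom_T$ to ``locally of finite presentation''; leaving the point as an aside leaves the statement unproved.
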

\begin{proof}
By Proposition \ref{prop:Fppfprojective}, there is an \'etale surjection 
$\phi: U \to T$ such that for $i=1$, $2$, the pullbacks, $g_{i,U} :
C_i\times_T U\to U$, are projective, flat and finitely presented. 
The inclusions $\Isom_{U}(g_{1,U},g_{2,U}) \subset
\Hom_{U}(g_{1,U},g_{2,U}) \subset \HILB{(C_{1,T} \times_{T}
  C_{2,T})\times_T U/U}$ are
representable by finitely-presented open immersions. Indeed, $\curv_S$
is limit preserving, so we may assume that $T$ is noetherian.  The
first inclusion is covered by \cite[\textbf{II}, 4.6.7(ii)]{EGA}
(without any dimension hypotheses on the fibers of $C_i$ over $T$). We
observe that the assertion for the second inclusion follows from the
first. Indeed, the latter inclusion is given by the graph homomorphism
and it has image those families of closed subschemes of $(C_{1}
\times_T C_{2})\times_T U$  for which projection onto the first
factor is an isomorphism, which as we have already seen is an open
condition.{} 

From the existence of the Hilbert scheme for finitely presented
projective morphisms, we make the following two observations:  
\begin{enumerate}
\item $\Hom_T(g_1,g_2) \times_{T} U \simeq
  \Hom_U(g_{1,U},g_{2,U})$ is representable by a separated and
  locally of finite type $S$-scheme. In particular, the morphism
  $\Hom_U(g_{1,U},g_{2,U}) \to \Hom_T(g_1,g_2)$ is \'etale and
  surjective. 
\item  The map $\Hom_U \times_{\Hom_T }  \Hom_U \rightarrow \Hom_U 
  \times_{U} \Hom_U$ is a closed immersion. Indeed, this is simply the
  locus where two separated morphisms of schemes agree. 
\end{enumerate}
Putting these together, one concludes that $\Hom_T(g_1,g_2)$ is
representable by a separated and locally of finite type algebraic
$T$-space. Since 
finitely presented open immersions are local for the \'etale topology,
we deduce the corresponding result for $\Isom_T(g_1,g_2)$.{}

All that remains is to verify that $\Hom_T(g_1,g_2)$ is quasicompact
in the case that the $g_i$ are projective. We plagiarize the argument
of \cite{dejong-2008} and include it for completeness only. Let
$\mathscr{L}_i$ be a $T$-ample line bundle for $g_i$. We may assume
that the Hilbert polynomials of the fibers of the curve $C_i \to T_i$
with respect to $\mathscr{L}_i$ are all equal to a fixed polynomial
$P_i$. Let $p_i : C_1\times_T C_2 \to C_i$ 
denote the $i$th projection and set $\mathscr{L} = p_1^*\mathscr{L}_1  
\otimes p_2^*\mathscr{L}_2$. Let $T'$ be a $T$-scheme and set $C_i' =
C_i \times_T T'$ and let $\psi_i : C_i' \to C_i$ denote the induced
map. For a $T'$-morphism $f : C_1' \to C_2'$, we have its graph 
$\Gamma_f : C_1' \to C_1'\times_{T'} C_2'$. Set 
$\psi := \psi_1 \times \psi_2: C_1'\times_{T'} C_2' \to
C_1\times_T C_2$, then we will show that the Hilbert polynomials of the
fibers of $\Gamma_f$ over $T'$ with respect to $\psi^*\mathscr{L}$ are
all equal to $P_1 +  P_2+P_1(0)$. If we show this, then we're done,
since then the map $\Hom_T(g_1,g_2) \to \HILB{C_1\times_T C_2/T}$
factors through the subfunctor of $\HILB{C_1\times_T C_2/T}$
corresponding to those flat families with Hilbert polynomial $P_1 +
P_2 + P_1(0)$.  This subfunctor is 
represented by a projective scheme, thus $\Hom_T(g_1,g_2)$ would be
quasi-projective, hence quasi-compact.{}     

It suffices to take $T' = \spec \Bbbk$, where $\Bbbk$ is a field. Let
$f : C_1' \to C_2'$ be a $T'$-morphism, then by Snapper's Theorem
\cite[Thm.\ B.7]{MR2222646}:     
\[
\chi(C_1', \Gamma_f^*\psi^*\mathscr{L}^{\tensor n}) =
\chi(C_1',\psi^*\mathscr{L}_1^{\tensor n} \tensor
f^*\psi_2^*\mathscr{L}_2^{\tensor n}) =
\chi(C_1',\psi_1^*\mathscr{L}_1^{\tensor 
  n})+ \chi(C_1',f^*\psi_2^*\mathscr{L}_2^{\tensor n}) +
\chi(C_1',\Orb_{Y_1}).  
\]
Observe that $\chi(C_1',f^*\psi_2^*\mathscr{L}_2^{\tensor n}) =
\chi(C_2',\psi^*_2\mathscr{L}_2^{\tensor n})$ and the above now expresses
that the Hilbert polynomial of the graph of $f$ is $P_1 + P_2 +
P_1(0)$. 
\end{proof}
\section{Existence of a Smooth Cover}
To construct a smooth cover of the stack $\curv_S$ by a scheme, we need to
understand the deformation theory of singular curves. A good
introduction to deformation theory is contained in \cite{MR2247603}
and \cite{MR2222646}. Our setup will be slightly different than what
appears in those sources, however. 

Throughout, we assume that $\Bbbk$ is an $S$-field, not necessarily
algebraically closed. Let $\ART{\Bbbk}$ denote the category with
objects $(A,\imath)$, where $A$ is a local artinian $S$-algebra, with
maximal ideal $\mathfrak{m}_A$, and an $S$-map $\imath : A \to
\Bbbk$. The map $\imath$ automatically induces an isomorphism of 
$S$-fields $\bar{\imath} : A/\mathfrak{m}_A \to \Bbbk$.   The 
morphisms in $\ART{\Bbbk}$ are the obvious ones. If $X$ is a
$\Bbbk$-scheme, define the functor of 
$S$-deformations $\Def_X : \ART{\Bbbk}^\OPP \to \SETS$ as follows. For
$(A,\imath)\in \ART{\Bbbk}$, $\Def_X(A,\imath)$ is the set of
isomorphism of classes of cartesian diagrams: 
\[
\xymatrix{X \ar[d] \ar[r]  &  \ar[d]
  X\tensor_{\Bbbk}{A/\mathfrak{m}_A} \ar[r] & X'
  \ar[d] \\ \spec \Bbbk \ar[r] & \spec A/\mathfrak{m}_A  \ar[r] & \spec
  A}, 
\]
where $X' \to \spec A$ is flat. Note that if we
have a morphism of deformations $X' \to X''$, then since we
necessarily have an isomorphism $X'\tensor_\Bbbk A/\mathfrak{m}_A \to
X''\tensor_{\Bbbk} A/\mathfrak{m}_A$, then $X' \to X''$ is an
isomorphism by the flatness over $A$.{} 

Let $Y$ be an $S$-scheme, if $\jmath : X \hookrightarrow Y\tensor_S
\Bbbk$ (when the context is clear, we will 
henceforth write $X \subset Y\tensor_S \Bbbk$) is a closed immersion
of $\Bbbk$-schemes, then define the embedded deformation functor 
$\Def_{X\subset Y\tensor_S \Bbbk} : \ART{\Bbbk}^\OPP \to \SETS$ as
follows. For $(A,\imath)\in 
\ART{\Bbbk}$, $\Def_{X \hookrightarrow Y\tensor_S \Bbbk}(A,\imath)$ is
the set of isomorphism of classes of cartesian diagrams: 
\[
\xymatrix{X \ar[d] \ar[r] &  \ar[d]  X\tensor_{\Bbbk}{A/\mathfrak{m}_A} \ar[r] &
  X' \ar[d] \\ Y \ar[d] \ar[r] & 
  Y\tensor_{S}{A/\mathfrak{m}_A} \ar[r] \ar[d] & Y\tensor_{S}A \ar[d] \\
 \spec \Bbbk \ar[r] & \spec  A/\mathfrak{m}_A  \ar[r] & \spec A},
\]
where $X' \to \spec A$ is flat. The same argument as before shows that
any map $X' \to X''$ of embedded deformations is an isomorphism. There
is an obvious natural transformation $\Def_{X \subset Y\tensor_S 
  \Bbbk} \to \Def_X$ given by forgetting the embedding into $Y$.
Given $(A,\imath)\in \ART{\Bbbk}$, we can define a functor $\spec
(A,\imath) : \ART{\Bbbk}^\OPP \to \SETS$ as $(A',\imath') \mapsto
\Hom_{\ART{\Bbbk}}((A,\imath),(A',\imath'))$. Note that the Yoneda Lemma
immediately implies that a map $\spec (A,\imath) \to F$, where $F$ is
a functor $F : \ART{\Bbbk}^\OPP \to \SETS$, is equivalent to an
element of $F(A,\imath)$.{}

A natural transformation of functors from $\ART{\Bbbk}^\OPP \to \SETS$, $F
\to G$ is said to be \textbf{formally smooth} if for any surjection $(A,\imath)
\to (A_0,\imath_0)$ and any diagram
\[
\xymatrix{\spec (A_0,\imath_0) \ar[d] \ar[r] & F \ar[d] \\\spec
  (A,\imath) \ar[r] \ar@{-->}[ur]^{\exists}& G}  
\]
we may fill in the dashed arrow so that it commutes. Note that if
$f : (A,\imath) \to (A_0,\imath_0)$ is a surjection, then it may be
factored into a sequence of surjections:
\[
(A,\imath) =  (A_n,\imath_n) \to (A_{n-1},\imath_{n-1})\to \cdots \to
(A_1,\imath_1) \to (A_0,\imath_0)
\]
where $\mathfrak{m}_{A_i}\ker(A_i \to A_{i-1})=0$ (this is immediate
from the Jordan-H\"older Theorem). We call such
morphisms \emph{small extensions} and note that any such morphism has
square 0 kernel.{}

The next two results are to be considered folklore in this generality.
For similar statements, with stronger hypotheses, see for example
\cite[Prop.\ 3.2.9]{MR2247603} and \cite[Cor.\ 8.5.32]{MR2222646}. 
\begin{thm}\label{thm:main_defthy}
  Suppose that $X$ is a projective $\Bbbk$-scheme, with
  $h^2(\Orb_X)=0$. Consider an embedding $X \hookrightarrow \Pr^N$
  such that $h^1(X,\Orb_X(1)) = 0$, then $\Def_{X \subset \Pr^N} \to
  \Def_X$ is formally smooth.
\end{thm}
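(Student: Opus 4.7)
The plan is to verify formal smoothness by the lifting criterion applied to small extensions: since any surjection in $\ART{\Bbbk}$ factors into a chain of small extensions (as the paper has already noted via Jordan--H\"older), it suffices to treat a single small extension $(A,\imath)\twoheadrightarrow(A_0,\imath_0)$ with square-zero kernel $J$. Here $\mathfrak m_A J = 0$ makes $J$ a $\Bbbk$-vector space, and flatness of any $A$-flat deformation $X_A$ gives $J\tensor_A \Orb_{X_A} = J\tensor_\Bbbk \Orb_X$. The lifting problem supplies an embedded deformation $\iota_{A_0}:X_{A_0}\hookrightarrow \Pr^N_{A_0}$, an abstract deformation $X_A\to\spec A$, and an identification $X_A\tensor_A A_0\cong X_{A_0}$; the task is to extend $\iota_{A_0}$ to a closed immersion $X_A\hookrightarrow\Pr^N_A$.

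First I would lift the invertible sheaf $\mathscr{L}_{A_0}:=\iota_{A_0}^*\Orb_{\Pr^N_{A_0}}(1)$ to $X_A$. The multiplicative square-zero sequence
\[
1 \longrightarrow J\tensor_\Bbbk \Orb_X \xrightarrow{\,a\mapsto 1+a\,} \Orb_{X_A}^\times \longrightarrow \Orb_{X_{A_0}}^\times \longrightarrow 1
\]
places the obstruction to lifting along $\Pic(X_A)\to \Pic(X_{A_0})$ in $H^2(X,\Orb_X)\tensor_\Bbbk J$, which vanishes by the hypothesis $h^2(\Orb_X)=0$. Choose such a lift $\mathscr{L}_A$. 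Next I would tensor the additive square-zero sequence of $\Orb_{X_A}$-modules by the (flat) line bundle $\mathscr{L}_A$ to obtain
\[
0 \longrightarrow J\tensor_\Bbbk \Orb_X(1) \longrightarrow \mathscr{L}_A \longrightarrow \mathscr{L}_{A_0} \longrightarrow 0,
\]
and the hypothesis $h^1(\Orb_X(1))=0$ makes the restriction $H^0(X_A,\mathscr{L}_A)\to H^0(X_{A_0},\mathscr{L}_{A_0})$ surjective. Lift the $N+1$ sections defining $\iota_{A_0}$ to $\tilde s_0,\ldots,\tilde s_N\in H^0(X_A,\mathscr{L}_A)$.

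These $\tilde s_i$ globally generate $\mathscr{L}_A$: the cokernel of $\Orb_{X_A}^{N+1}\to\mathscr{L}_A$ is coherent and vanishes on the closed fiber (where $\iota$ is the embedding), hence vanishes by Nakayama. They therefore determine a morphism $\phi_A:X_A\to\Pr^N_A$ with $\phi_A\tensor_A A_0=\iota_{A_0}$ by construction. To conclude I would show that $\phi_A$ is a closed immersion: it is proper, being a morphism over $A$ from the proper $X_A$ to the separated $\Pr^N_A$; it is radicial, because the underlying topological space and residue fields of $\Pr^N_A$ coincide with those of $\Pr^N_\Bbbk$ and $\iota:X\hookrightarrow\Pr^N_\Bbbk$ is radicial; and it is unramified, because $\Omega^1_{X_A/\Pr^N_A}$ restricts to $\Omega^1_{X/\Pr^N_\Bbbk}=0$ on the closed fiber and so vanishes by Nakayama. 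A proper monomorphism is a closed immersion, so $\phi_A$ supplies the desired embedded lift. The cohomological lifts slot in cleanly against the two stated vanishing hypotheses; the main obstacle is the last step of verifying that the constructed morphism is an honest closed immersion rather than merely a morphism, which is handled by the fiberwise Nakayama arguments above.
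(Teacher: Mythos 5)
Your proof is correct, but it takes a genuinely different route from the paper's. The paper feeds both hypotheses into the pulled-back Euler sequence
\[
0 \to \Orb_X \to \Orb_X(1)^{\oplus(N+1)} \to \imath^*T_{\Pr^N} \to 0
\]
to deduce $H^1(X,\imath^*T_{\Pr^N})=0$, and then invokes Proposition \ref{prop:illusie}, the general cotangent-complex/$\Exal$ criterion for formal smoothness of $\Def_{X\subset Y}\to\Def_X$ when $Y$ is smooth and $H^1(X,\jmath^*T_Y)=0$; the theorem is thus a two-line corollary of machinery already built. You instead argue directly and use the two hypotheses separately and transparently: $h^2(\Orb_X)=0$ kills the obstruction in the multiplicative square-zero sequence to lifting $\Orb_X(1)$ along a small extension, $h^1(\Orb_X(1))=0$ lifts the $N+1$ defining sections, and Nakayama over the nilpotent ideal upgrades global generation and the closed-immersion property from the closed fiber (proper $+$ radicial $+$ unramified $\Rightarrow$ proper monomorphism $\Rightarrow$ closed immersion, which is consistent with the analogous ``closed immersion modulo nilpotents'' step the paper itself uses inside Proposition \ref{prop:illusie}). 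Each step checks out: the identification of the kernel with $J\tensor_\Bbbk\Orb_X$ uses $\mathfrak m_A J=0$ and flatness correctly, and the base-change identities for $\Omega^1$ and for cohomology of sheaves supported on the closed fiber are all standard. What the paper's route buys is generality (any smooth ambient $Y$) and economy given that Proposition \ref{prop:illusie} is needed anyway; what your route buys is a self-contained, elementary argument that avoids the cotangent complex entirely and makes visible exactly which vanishing hypothesis does which job --- at the cost of being specific to morphisms into projective space determined by a line bundle and its sections.
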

We will prove this in a moment. The following is a variant of
\cite[Thm.\ 8.5.31]{MR2222646}, with a supplied proof.
\begin{prop}\label{prop:illusie}
  Let $X$ be a proper $\Bbbk$-scheme and consider an
   embedding $\jmath : X 
  \hookrightarrow Y\tensor_S \Bbbk$, where $Y$ is a smooth $S$-scheme,
  then if $H^1(X,\jmath^*T_{Y\tensor_S\Bbbk/\Bbbk}) = 0$, $\Def_{X
    \subset Y\tensor_S \Bbbk} \to \Def_X$ is formally smooth. 
\end{prop}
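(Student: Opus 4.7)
The plan is to reduce the problem to small extensions and then apply the standard obstruction theory for morphisms into a smooth target. After factoring any surjection $(A,\imath) \to (A_0,\imath_0)$ in $\ART{\Bbbk}$ as a sequence of small extensions, it suffices to verify the lifting property when $I := \ker(A \to A_0)$ satisfies $\mathfrak{m}_A I = 0$, so that $I$ is a $\Bbbk$-vector space. Concretely, I start with an embedded deformation $\jmath' : X' \hookrightarrow Y \tensor_S A_0$ and an abstract deformation $X''/\spec A$ with identification $X'' \tensor_A A_0 \cong X'$, and I must produce a closed immersion $\tilde{\jmath} : X'' \hookrightarrow Y_A := Y \tensor_S A$ restricting to $\jmath'$ over $A_0$.

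The key observation is that constructing $\tilde{\jmath}$ is equivalent to lifting the $A$-morphism $X' \to Y_{A_0} \hookrightarrow Y_A$ to an $A$-morphism $X'' \to Y_A$: any such lift is automatically a closed immersion, because its underlying continuous map coincides with the closed immersion $\jmath'$ on the common topological space of $X'$ and $X''$, and surjectivity of $\Orb_{Y_A} \to \tilde{\jmath}_* \Orb_{X''}$ follows from surjectivity modulo $I$ together with a snake-lemma argument applied to the square-zero extensions on both sides. Since $Y_A / \spec A$ is smooth (as $Y/S$ is smooth) and the ideal of $X' \hookrightarrow X''$ is canonically $I \tensor_\Bbbk \Orb_X$ by flatness of $X''/A$, the standard deformation theory for lifting a morphism into a smooth target places the obstruction in $\Ext^1_{\Orb_{X'}}(\jmath'^*\Omega^1_{Y_A/A},\, I \tensor_\Bbbk \Orb_X)$.

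To conclude, I would identify this Ext group with $H^1(X, \jmath^* T_{Y \tensor_S \Bbbk/\Bbbk}) \tensor_\Bbbk I$. Local freeness of $\jmath'^* \Omega^1_{Y_A/A}$ on $X'$ reduces Ext to $H^1$ of the corresponding $\mathcal{H}om$-sheaf, and since that target sheaf is annihilated by the ideal of $X \hookrightarrow X'$, the $\mathcal{H}om$ factors through restriction to $X$ and collapses to $\jmath^* T_{Y \tensor_S \Bbbk/\Bbbk} \tensor_\Bbbk I$. The hypothesis $H^1(X,\jmath^* T_{Y\tensor_S\Bbbk/\Bbbk}) = 0$ then kills the obstruction and produces $\tilde{\jmath}$. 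The main technical obstacle I anticipate is precisely this identification of the obstruction group: one must carefully track the three structure sheaves $\Orb_X$, $\Orb_{X'}$, $\Orb_{X''}$ and invoke flatness in the right places, although each individual step is standard.
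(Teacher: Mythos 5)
Your proof is correct and follows essentially the same route as the paper: reduce to a small extension, observe that it suffices to lift the morphism $X'\to Y_{A}$ across the thickening (any such lift being automatically a closed immersion), and identify the obstruction group with $H^1(X,\jmath^*T_{Y\tensor_S\Bbbk/\Bbbk})\tensor_\Bbbk I$. The only cosmetic difference is that the paper produces the obstruction via Illusie's transitivity triangle for the cotangent complex and the resulting surjection $\Exal_{\Orb_{Y_{A_1}}}(\Orb_{X_0},s_0^*I)\to\Exal_{\Orb_{S_1}}(\Orb_{X_0},s_0^*I)$, whereas you invoke the classical obstruction theory for lifting a morphism into a smooth target; these amount to the same computation.
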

\begin{proof}
  Fix a small extension $(A_1,\imath) \to (A_0,\imath_0)$ and let $K =
  A_1/\mathfrak{m}_{A_1} = A/\mathfrak{m}_{A_0}$. Consider a diagram 
  \[
  \xymatrix{X \ar[r] \ar[d] & \ar[d] Y\tensor_S \Bbbk \\
    X\tensor_\Bbbk K \ar[r] \ar[d] & Y_K := Y\tensor_S K \ar[d]\\
    X_{0} \ar@{^{(}->}[r]  \ar[d]  & Y_{A_0}:=Y\tensor_S A_0 \ar[d] \\
    X_{1} \ar@{-->}[r] & Y_{A_1}:=Y\tensor_S A\\
  }
  \]
  where $[X_1]  \in \Def_{X}(A_1)$, $[X_0 \hookrightarrow
  Y_{A_0}] \in \Def_{X \subset Y\tensor_S A_0}(A_0)$, and each
  restrict to $[X_0] \in \Def_{X}(A_0)$. To show that $\Def_{X \subset
    Y\tensor_S \Bbbk} \to \Def_X$ is formally smooth, it suffices to
  construct a map $X_{1}   \rightarrow Y_{A_1}$, since any such map is 
  automatically a closed immersion. Indeed, the morphism is affine by
  using Serre's Criterion, and by the Nakayama Lemma for modules over an
  artinian ring, it is a closed immersion, because it is a closed
  immersion modulo a nilpotent ideal.
  
  For $i=0$, $1$, let $S_i = \spec A_i$, and consider the composition
  of morphisms $X_0 \xrightarrow{f} Y_{A_1} \to S_1$. By
  \cite[II.2.1.2]{MR0491680} there is a distinguished triangle: 
  \[
  \xymatrix{f^*L_{Y_{A_1}/S_1} \ar[r] &  L_{X_0/S_1} \ar[r] &  L_{X_0/Y_{A_1}}}.
  \]
  Note that since the closed immersion $S_0 \to S_1$ is defined by a
  square 0 sheaf of ideals $I$, it is supported on $\spec K$ and hence
  $S_0$. Let $s_0 : X_0 \to S_0$ be the structure map, taking the long
  exact sequence associated to $\Hom_{X_0}(-,s_0^*I)$, gives an exact
  sequence:  
  \[
  \xymatrix{\Ext^1_{X_0}(L_{X_0/Y_{A_1}}, s_0^*I)\ar[r]  & \ar[r]
    \Ext^1_{X_0}(L_{X_0/S_1},s_0^*I) &
    \Ext^1_{X_0}(f^*L_{Y_{A_1}/S_1},s_0^*I)}.  
  \]
  Since $Y_{A_1} \to S_1$ is smooth, then $L_{Y_{A_1}/S_1} \cong
  \Omega_{Y_{A_1}/S_1}$, by \cite[III.3]{MR0491680}. In particular, if
  $\jmath_0 : X_0 \to Y_{A_0}$ is the inclusion and $\sigma : Y_{A_0}
  \to Y_{A_1}$ is the induced map from $S_0 \to S_1$, then
  $\sigma\jmath_0 = f$ and 
  \[
  f^*L_{Y_{A_1}/S_1} \cong f^*\Omega_{Y_{A_1}/S_1} \cong
  \jmath_0^*\sigma^*\Omega_{Y_{A_1}/S_1} \cong \jmath_0^*\Omega_{Y_{A_0}/S_0},
  \]
  since differentials pullback along the base. By
  \cite[$\mathbf{0}_{\textrm{III}}$, 12.3.5]{EGA} and
  \cite[$\mathbf{0}_{\textrm{I}}$, 5.4]{EGA}, since 
  $\jmath_0^*\Omega_{Y_{A_0}/S_0}$ is locally free of finite rank and
  $s_0^*I$ is coherent, then   
  \begin{align*}
    \Ext^1_{X_0}(f^*L_{Y_{A_1}/S_1},s_0^*I) &\cong
    H^1(X_0,Hom_{\Orb_{X_0}}(\jmath_0^*\Omega_{Y_{A_0}/S_0},s_0^*I))\\
    &\cong
    H^1(X_0,Hom_{\Orb_{X_0}}(\jmath_0^*\Omega_{Y_{A_0}/S_0},\Orb_{X_0})
    \tensor_{\Orb_{X_0}} s_0^*I)\\
    &\cong H^1(X_0,\jmath_0^*T_{Y_{A_0}/S_0}\tensor_{\Orb_{X_0}}
    s_0^*I)\\
    &\cong
    H^1(X_0,\jmath_0^*T_{Y_{A_0}/S_0}\tensor_{s_0^{-1}\Orb_{S_0}} I)\\
    &\cong H^1(X_0,\jmath_0^*T_{Y_{A_0}/S_0})\tensor_{S_0} I,
  \end{align*}
  because $s_0 : X_0 \to S_0$ is flat. Noting that the coherent sheaf
  $\jmath_0^*Y_{Y_{A_0}/S_0}$ is flat over the artinian local scheme
  $S_0$ and $H^1(X_K,\jmath_K^*T_{Y_K/K}) =
  H^1(X,\jmath^*T_{Y_\Bbbk/\Bbbk})\tensor_\Bbbk K = 0$, then
  \cite[Exercise III.11.8]{MR0463157} implies that
  $H^1(X_0,\jmath_0^*T_{Y_{A_0}/A_0}) = 0$. Thus,
  $\Ext_{X_0}(f^*L_{Y_{A_1}/S_1},s_0^*I)=0$.{} 

  We now apply \cite[III.1.2]{MR0491680} to observe that our original 
  exact sequence (together with the vanishing result proved above)
  provides a surjection: 
  \[
  \xymatrix{\Exal_{\Orb_{Y_{A_1}}}(\Orb_{X_0},s_0^*I) \ar[r] &
    \Exal_{\Orb_{S_1}}(\Orb_{X_0},s_0^*I) \ar[r] &  0 }.  
\]
In particular, $[X_{0}]$ is an element of
$\Exal_{\Orb_{S_1}}(\Orb_{X_0},s_0^*I)$ and so there is an
$\Orb_{Y_{A_1}}$-extension of $\Orb_{X_0}$ by $s_0^*I$ corresponding to
$X_{A_1}$ (indeed, it is given by the sheaf of algebras
$\Orb_{X_{A_1}}$). Hence, there is a map of sheaves of algebras $\Orb_{Y_{A_1}}
\to \Orb_{X_{1}}$ and consequently a morphism of schemes $X_1 \to Y_{A_1}$,
which extends $X_0 \to Y_{A_1}$. 
\end{proof}
\begin{proof}[Proof of Theorem \ref{thm:main_defthy}]
  Since the Euler sequence is exact, we may pull it back and dualize
  it to obtain an exact sequence:
  \[
  \xymatrix{0 \ar[r] & \Orb_X \ar[r] & \Orb_X(1)^{\oplus (N+1)} \ar[r]
    & \imath^*T_{\Pr^N} \ar[r] & 0}.
  \]
  Taking the long exact sequence of cohomology, we arrive at the
  following segment:
  \[
  \xymatrix{H^1(\Orb_X(1))^{N+1} \ar[r] & H^1(X,\imath^*T_{\Pr^N})
    \ar[r] & H^2(\Orb_X)}.
  \]
  The assumptions ensure that $H^1(X,\imath^*T_{\Pr^N}) = 0$. An
  application of Proposition \ref{prop:illusie} proves the result.
\end{proof}
With the relevant deformation theory in place, we can now complete the
proof of Theorem \ref{thm:stack_of_curv}.
\begin{proof}[Proof of Theorem \ref{thm:stack_of_curv}]
  We have shown that $\curv_S$ is a limit
  preserving stack over $\BETSITE{\SCH{S}}$. In \S\ref{sec:diagonal},
  we proved that the diagonal is represented by finitely presented,
  quasi-compact, and separated algebraic $S$-spaces. Thus, to show
  that $\curv_S$ is an algebraic $S$-stack, it remains to construct an
  $S$-scheme
  $U$ together with a smooth, surjective $S$-morphism $U \to \curv_S$.{}

  To prove the existence of a smooth cover, we consider a geometric
  point $\spec \Bbbk \to \curv_S$. This corresponds to $C \to \spec \Bbbk$,
  for some 1-dimensional projective scheme $C$. Pick a $\Bbbk$-embedding
  $C \to \Pr^M_{\Bbbk}$ such that $H^1(\Orb_C(1)) = 0$. Let $U_{C/\Bbbk}$
  denote an affine open neighbourhood of the point $x: \spec \Bbbk \to
  \mathrm{Hilb}_{\Pr^M_S/S}$ and take $V_{C/\Bbbk} \to U_{C/\Bbbk}$ to denote
  the universal family. By Cohomology and Base Change
  \cite[Thm.\ 12.11]{MR0463157}, we may replace $U_{C/\Bbbk}$ (and so
  $V_{C/\Bbbk}$ changes also) by an affine open subset containing the
  image of $x$ such that $h^1(V_{C/\Bbbk,v},\Orb_{V_{C/\Bbbk,{v}}}(1)) = 0$
  and all fibers are flat of
  dimension 1 for all points ${v} \to U_{C/\Bbbk}$. {}

  In particular, we obtain a finitely presented morphism $U_{C/\Bbbk} \to
  \curv_S$. We will now proceed to show that this morphism is in fact
  smooth. That is, if $T \to \curv_S$ is an $S$-morphism, then
  $p_T : T' := T\times_{\curv_S} U_{C/\Bbbk} \to T$ is smooth. Since
  $\curv_S$ is limit preserving, we may assume that $T$ is
  noetherian. By taking a faithfully flat \'etale cover of 
  $T$, by Proposition \ref{prop:Fppfprojective}, we may assume that the family
  of curves corresponding to the map $T \to \curv_S$ is
  projective. Thus, the map $p_T$ is a map of schemes.  To show that
  the map $p_T$ is smooth, 
  by \cite[\textbf{IV}, 17.14.2]{EGA}, it suffices to fix $t' \in
  T'$, let $t=p_T(t')\in T$ and we need to show that there is an arrow
  completing the diagram: 
  \[
  \xymatrix{\spec A/I \ar[r] \ar[d] & T'
    \ar[d] \ar[r] & U_{C/\Bbbk} \ar[d] \\ \spec A \ar[r] \ar@{-->}[ur]& T \ar[r]
    & \curv_S }
  \]
  where $A$ is an artin local ring, $I\ideal A$ is an ideal, with
  the closed point of $\spec A$ mapping to $t$, and the residue field
  of $A/I$ is the same as the residue field of $t'$, $K$. Let
  the $\spec A$-curve $C_A \to \spec A$ denote that induced by the
  morphism $\spec A \to \curv_S$. Then the $2$-commutativity of the
  diagram implies that there is a $\spec A/I$-embedding $C_A\tensor
  (A/I) \to \Pr^M_{A/I}$. Let $C_K \to \spec K$ denote the
  $K$-curve $C_A \tensor_A K$. We have a map $u : \spec
  K \to U_{C/\Bbbk}$, and a $K$-isomorphism $V_{C/\Bbbk,u} \to
  C_K$, thus $h^1(C_K, \Orb_{C_K}(1)) = 0$.  By Theorem
  \ref{thm:main_defthy}, the map $\Def_{C_K \subset \Pr^M_K} \to
  \Def_{C_K}$ is formally smooth, hence a map $\spec A \to
  U_{C/\Bbbk}$ exists completing the given diagram. 
 
  The isomorphism classes of morphisms $Y \to \curv$, where $Y$ is an
  affine open subscheme of $H_N$ for some $N$, form a set. Take $U$ to
  be the disjoint union over all those such $Y \to \curv$ which are
  smooth. By the above, $U \to \curv$ is smooth and surjective.
\end{proof}
\section{Proofs of the Corollaries}\label{sec:cors}
In this section, we run through the proofs of the corollaries. 
\begin{proof}[Proof of Corollary \ref{cor:stack_of_curv1}]  
  First, we show that the universal curve $\curv_{S,1}$ is an algebraic
  stack, which is locally of finite type over $S$. This is obvious:
  one has the forgetful morphism $\curv_{S,1} \to \curv_S$ (given by
  forgetting the section of the family) and this morphism is
  representable. Indeed, for an affine scheme $T \to \curv_S$
    (corresponding to a $T$-curve), the 2-fiber product
    $T \times_{\curv_S} \curv_{S,1} \cong \Hom_T(T,C)=C$. Noting that  
  \[
  \curv_{S,n} \cong \underbrace{\curv_{S,1} \times_{\curv_S} \cdots
    \times_{\curv_S} \curv_{S,1}}_n,
  \]
  we conclude the general case. 
\end{proof}
\begin{proof}[Proof of Corollary \ref{cor:stack_of_curv2}] {\par\indent}
  In all cases, we need to check that if $C \to T$ is a $T$-curve,
  then the locus in $T$ which satisfies the condition has a natural
  scheme structure. Cases (1)-(9) are all open conditions, by the
  results of \cite[\textbf{IV}, \S 12.2]{EGA}. {}

 For the remainder of the cases, we may reduce to the noetherian case
  as follows: we will reduce to the case of a noetherian base and
  a projective family.  Since $\curv$ is limit preserving, $f$
  factors as $\spec A \to \spec A_0 \to \curv$, where $A_0$ is of
  finite type over $\mathbb{Z}$. All the conditions are geometrically
  fibral, so we may work over a faithfully flat \'etale extension of
  the base $\spec A_0$. Hence, it suffices to consider those families
  of curves which are projective over an affine noetherian base.
  For the remainder, we fix an object $f : C\to T$ of $\curv_S$, where
  $T$ is the spectrum of the noetherian ring $A$ and we assume that
  $f$ is projective.
  \begin{enumerate}
  \item[(10)] \cite[Cor.\ III.9.3]{MR0463157} shows this condition
    is open and closed.
  \item[(11)] Follows immediately from Chevalley's semicontinuity
    Theorem. 
  \item[(12)] It suffices to show that if $C \to S$ is an object of 
    $\curv_S$, where $S$ is the spectrum of a noetherian ring $A$,
    then $S \times_{\curv} S \to S$ being unramified is an open
    condition on $S$. This will follow from the more
    general assertion: let $p: G \to S$ be a locally of
    finite type, group algebraic space, with $S$ noetherian, then if
    $\bar{s} \to S$ is a geometric point and the group scheme
    $G_{\bar{s}} \to \bar{s}$ is unramified, then there is an open
    subscheme $U$ of $\bar{s}$ such that $G_U \to U$ is
    unramified. Observe that we can find an open subspace $W$
    containing $G_{\bar{s}} \subset 
    G$ such that $p\mid_W : W \to S$ is unramified. Let $e : S \to G$
    be the identity section and $\imath : W \to G$ the immersion, then
    the fiber product $U = W \times_G S$ is an open subscheme of
    $S$. Moreover, for any geometric point $\bar{u} \to U$ we have
    $G_{\bar{u}} \to \bar{u}$ is unramified on an open subscheme of
    the identity and by using translations in this group, we can cover
    it by unramified open subschemes.
  \item[(13)] If the complex has flat cohomology over the base, then it is
    immediate from cohomology and base change. If the cohomology is
    not flat, take a flattening stratification (the morphism is
    projective, so these exist), then apply the earlier case. In this
    situation, you obtain a locally closed substack (as opposed to an
    open substack).
  \end{enumerate}
\end{proof}
To prove Corollary \ref{cor:stack_of_curv3}, it remains to show that
the stack is quasicompact. We proceed to prove the relevant
boundedness results. The following argument is due to F. Van Der
Wyck.
\begin{lem}\label{lem:bd_embd}
  Let $\Bbbk$ be an algebraically closed field and $S$ a reduced
  curve singularity, then $S$ may be embedded in an affine space of 
  dimension $\leq (\delta_S+1)^2$.
\end{lem}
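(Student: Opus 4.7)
The plan is to bound the embedding dimension $\mathrm{embdim}(A)$ of the complete local ring $A := \widehat{\mathcal{O}}_{S,p}$, which coincides with the minimal $n$ for which the germ $S$ embeds into $\mathbb{A}^n_\Bbbk$. I would in fact establish the stronger bound $\mathrm{embdim}(A) \le 2\delta_S$; this suffices because $(\delta_S+1)^2 - 2\delta_S = \delta_S^2 + 1 > 0$, and the boundary case $\delta_S = 0$ is handled separately ($A$ is then a DVR with $\mathrm{embdim} = 1 = (0+1)^2$). Write the normalization as $\bar A = \prod_{i=1}^r \Bbbk[[t_i]]$ with Jacobson radical $\bar{\mathfrak{m}}$, and let $\mathfrak{c} \subseteq A$ denote the conductor, i.e.\ the largest $\bar A$-ideal contained in $A$.

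Two preliminary observations I would record. First, $r \le \delta_S + 1$, since $\bar A/A$ surjects onto $(\bar A/\bar{\mathfrak{m}})/\Bbbk \cong \Bbbk^{r-1}$ (the quotient by the diagonal image of $A/\mathfrak{m}$). Second, as a $\bar A$-module the conductor factors as $\mathfrak{c} = \prod_i (t_i^{c_i})$, and in particular $\mathfrak{c} \simeq \bar A$ (each factor is principal, isomorphic to $\bar A_i$ via multiplication by $t_i^{c_i}$).

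The key technical step would be to analyse the short exact sequence of $\Bbbk$-vector spaces
\[
0 \to \mathfrak{c}/(\mathfrak{c} \cap \mathfrak{m}^2) \to \mathfrak{m}/\mathfrak{m}^2 \to \mathfrak{m}/(\mathfrak{c} + \mathfrak{m}^2) \to 0
\]
and to bound the outer terms by $\delta_S + 1$ and $\delta_S - 1$ respectively. For the left term: $\mathfrak{m}\mathfrak{c} \subseteq \mathfrak{c} \cap \mathfrak{m}^2$ since $\mathfrak{c} \subseteq \mathfrak{m}$, so by Nakayama $\dim_\Bbbk \mathfrak{c}/(\mathfrak{c} \cap \mathfrak{m}^2) \le \dim_\Bbbk \mathfrak{c} \otimes_A \Bbbk = \dim_\Bbbk \bar A \otimes_A \Bbbk$, and tensoring $0 \to A \to \bar A \to \bar A/A \to 0$ with $\Bbbk$ bounds the latter by $1 + \dim_\Bbbk(\bar A/A) = \delta_S + 1$. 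For the right term: $\mathfrak{m}/(\mathfrak{c} + \mathfrak{m}^2)$ is the cotangent space of the Artinian local ring $A/\mathfrak{c}$, and any Artinian local $\Bbbk$-algebra has embedding dimension at most its length minus one, so this term is bounded by $\dim_\Bbbk(A/\mathfrak{c}) - 1$.

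The hard part is then the classical conductor--$\delta$ inequality $\dim_\Bbbk(A/\mathfrak{c}) \le \delta_S$. I would derive it from the identification $\Hom_A(\bar A, A) = \mathfrak{c}$ inside the total ring of fractions: applying $\Hom_A(-, A)$ to $0 \to A \to \bar A \to \bar A/A \to 0$ produces an exact sequence yielding an injection $A/\mathfrak{c} \hookrightarrow \Ext^1_A(\bar A/A, A)$, and for the $1$-dimensional Cohen--Macaulay ring $A$ this $\Ext$ group has $\Bbbk$-dimension at most $\delta_S$ by local duality (with equality iff $A$ is Gorenstein). Combining everything yields $\mathrm{embdim}(A) \le (\delta_S + 1) + (\delta_S - 1) = 2\delta_S \le (\delta_S + 1)^2$, embedding $S$ into $\mathbb{A}^{(\delta_S + 1)^2}_\Bbbk$ as required.
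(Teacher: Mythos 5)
Your route is genuinely different from the paper's: the paper embeds each branch separately into affine space of dimension $\leq \delta_S+1$ via the value semigroup of a unibranch singularity and then takes the product over the $\leq \delta_S+1$ branches, whereas you bound the embedding dimension intrinsically by splitting $\mathfrak{m}/\mathfrak{m}^2$ along the conductor. Most of your argument is correct and would in fact give the sharper bound $2\delta_S$: the identification $\mathfrak{c}\cong\bar{A}$, the bound $\dim_\Bbbk \mathfrak{c}/(\mathfrak{c}\cap\mathfrak{m}^2)\leq\dim_\Bbbk\bar{A}\otimes_A\Bbbk\leq\delta_S+1$, and the bound of the cotangent space of $A/\mathfrak{c}$ by $\dim_\Bbbk(A/\mathfrak{c})-1$ are all fine.

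The gap is in your derivation of the conductor inequality $\dim_\Bbbk(A/\mathfrak{c})\leq\delta_S$. The injection $A/\mathfrak{c}\hookrightarrow\Ext^1_A(\bar{A}/A,A)$ is correct, but the claim that this $\Ext$ group has dimension at most $\delta_S$ ``by local duality'' fails unless $A$ is Gorenstein: local duality identifies $\Ext^1_A(M,\omega_A)$ with the $\Bbbk$-dual of a finite-length module $M$, where $\omega_A$ is the dualizing module, not $A$ itself. Concretely, for the ordinary triple point $A=\Bbbk[[x,y,z]]/(xy,yz,zx)$ one checks that $\mathfrak{m}=\bar{\mathfrak{m}}$, so $\bar{A}/A\cong\Bbbk^{\oplus 2}$ as an $A$-module and $\Ext^1_A(\bar{A}/A,A)\cong\Ext^1_A(\Bbbk,A)^{\oplus 2}$ has dimension $2\cdot(\mathrm{CM\ type\ of\ }A)=4$, whereas $\delta_S=2$; your injection then only yields $\dim_\Bbbk(A/\mathfrak{c})\leq 4$ rather than $\leq 2$ (the true value there is $1$). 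The inequality you need is nevertheless a genuine classical theorem --- equivalently $\dim_\Bbbk(\bar{A}/\mathfrak{c})\leq 2\delta_S$, see Serre, \emph{Groupes alg\'ebriques et corps de classes}, Ch.\ IV, \S 11 --- so the proof is repairable by citing that result, or by redoing the duality step against $\omega_A$ (realized as a fractional ideal with $A\subseteq\omega_A\subseteq\bar{A}$) instead of against $A$. As written, though, this step does not hold, and it is exactly the step carrying the arithmetic content of the lemma.
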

\begin{proof}
  First, suppose that $S$ is unibranched, then it is a finitely
  generated subalgebra of $\Bbbk[[t]]$. Let $f_1$, $\dots$, $f_r$
  denote a set of generators and we may assume that the
  degree of each $f_i$ is distinct. Let $M$ denote the semigroup
  generated by the degrees of the $f_i$. Observe that if $n=\min_i
  \{\deg f_i\}$, then $n \in M$. In particular, it follows that there
  are at most  $n-2$ other generators (by inspection of the residues),
  since the $\deg f_i$ are all distinct. Note that $n \leq \delta_S$
  and so the embedding dimension for a unibranched singularity is
  $\leq \delta_S + 1$.{}

  Now suppose there are $r$ branches, then $S \subset \prod_{i=1}^r
  \Bbbk[[t_i]]$. Note that the $\delta$ of a branch is bounded by
  $\delta_S$ and the number of branches is bounded by $\delta_S +
  1$. The former is obvious, the latter clear from the observation
  that $S$ doesn't contain the elements $t_i$ or $(t_1,\dots,t_r)$ and
  there are $r+1$ of these. Hence, the embedding dimension is bounded
  by $(\delta_S+1)^2$. 
\end{proof}
The following argument had inputs from D. Smyth and R. Vakil.
\begin{thm}\label{thm:bd_deg}
  Suppose that $C$ is a connected curve with $\leq e$
  irreducible components with arithmetic genus $g$, then there is an
  embedding $C \hookrightarrow \Pr^{N_{g,e}}$ such that $\deg C \leq 
  D_{g,e}$, where 
  \begin{align*}
    N_{g,e} &= (g+e)^2 + 1 \\
    D_{g,e} &= 2e(g+e-1)(g+e) + e^2. 
  \end{align*}
\end{thm}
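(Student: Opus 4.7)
The plan is to bound the local numerical invariants of $C$ uniformly in $g$ and $e$, then construct a very ample line bundle $L$ on $C$ of controlled degree, and finally descend to $\mathbb{P}^{N_{g,e}}$ via a generic projection. For the invariant bounds, let $C_1,\ldots,C_r$ be the irreducible components of $C$ (so $r\leq e$), let $\nu\colon\tilde{C}\to C$ be the normalization, and let $g_i$ be the geometric genus of $C_i$. The exact sequence $0\to\mathcal{O}_C\to\nu_*\mathcal{O}_{\tilde{C}}\to\mathcal{S}\to 0$, in which $\mathcal{S}$ is a skyscraper with $\dim_{\Bbbk}\mathcal{S}_p=\delta_p$, combined with $\chi(\mathcal{O}_C)=1-g$ (by connectedness) and $\chi(\nu_*\mathcal{O}_{\tilde{C}})=\sum_i(1-g_i)$, yields the identity
\[
\sum_i g_i \,+\, \sum_{p\in C^{\mathrm{sing}}}\delta_p \;=\; g+r-1 \;\leq\; g+e-1.
\]
In particular $\delta_p\leq g+e-1$ at every singular point, so Lemma~\ref{lem:bd_embd} shows that the Zariski tangent space of $C$ has dimension at most $(g+e)^2$ everywhere.

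Next, choose a smooth point $y_i$ on each $C_i$ and set $L:=\mathcal{O}_C\bigl(N(y_1+\cdots+y_r)\bigr)$ with $N:=2(g+e-1)(g+e)+e$. Then $\deg L=rN\leq D_{g,e}$, and the choice of $N$ is calibrated to exceed both $2p_a(C_i)$ for every $i$ (using $p_a(C_i)\leq 2(g+e-1)$) and the maximal embedding dimension $(g+e)^2$. This should force $H^1(C,L\otimes I_Z)=0$ for every length-two subscheme $Z\subset C$, which makes $L$ very ample; the complete linear system then furnishes a closed immersion $C\hookrightarrow\mathbb{P}^{h^0(L)-1}$ of degree $\leq D_{g,e}$. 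To reduce the ambient dimension to $N_{g,e}$, we project generically: since the tangent space dimension is bounded by $(g+e)^2$ pointwise and the secant variety of any projective curve is three-dimensional, a general linear projection remains a closed immersion as long as the target projective space has dimension strictly greater than $(g+e)^2$, so iterating lands in $\mathbb{P}^{(g+e)^2+1}=\mathbb{P}^{N_{g,e}}$ without changing the degree.

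The principal obstacle will be verifying very ampleness of $L$ at singularities whose Zariski tangent space may be as large as $(g+e)^2$-dimensional. Classical ``$\deg L\geq 2p_a(C)+1$'' criteria guarantee separation of smooth points but not separation of all tangent directions at such a point, and one must couple the conductor exact sequence with the $\delta$-bound to show that $N\approx 2(g+e-1)(g+e)+e$ actually suffices. The precise $e^2$ correction inside $D_{g,e}$ is exactly what one needs to absorb the additional sections used in gluing the $e$ components together.
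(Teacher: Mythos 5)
Your overall strategy is the same as the paper's: bound $\sum_p \delta_p \le g+e-1$ via the normalization sequence, take $\mathscr{L}=\Orb_C(y_1+\cdots+y_r)$ supported at one smooth point per component, raise it to the power $m=2(g+e-1)(g+e)+e$ to get a very ample bundle of degree $\le D_{g,e}$, and then project away from the secant and tangent varieties to land in $\Pr^{(g+e)^2+1}$. The numerology, the use of Lemma \ref{lem:bd_embd} to bound embedding dimension and branch number, and the final projection step all match.

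However, there is a genuine gap at the heart of the argument: you never actually prove that $\mathscr{L}^{m}$ is very ample; you say the choice of $m$ ``should force'' $H^1(C,\mathscr{L}^m\otimes I_Z)=0$ for all length-two $Z$ and then, in your closing paragraph, explicitly defer the verification at singular points as ``the principal obstacle.'' That verification is the actual content of the theorem. The calibrations you offer do not close it: a bound of the form $m\cdot r\ge 2p_a(C_i)+1$ controls separation along smooth points of an integral curve, but says nothing about separating tangent vectors at a point $c$ where several branches meet or where the local ring has large $\delta_c$; and comparing $m$ to the embedding dimension is not a cohomological statement at all. The paper closes this gap with a conductor-type exact sequence
\[
\xymatrix{0 \ar[r] & \pi_*\Orb_{\widetilde{C}}\bigl(-2\delta_c(c_1+\cdots+c_{r_c})\bigr) \ar[r] & \Orb_C(-2c) \ar[r] & \mathscr{E} \ar[r] & 0,}
\]
with $\mathscr{E}$ a skyscraper at $c$, which (after twisting by $\mathscr{L}^m$ and using exactness of $\pi_*$ plus the projection formula) reduces the required vanishing to $H^1$ of a line bundle on the smooth curve $\widetilde{C}$ whose degree on each branch is at least $m-2r_c\delta_c\ge e>0$ once $m\ge 2r_c\delta_c+e$; the bounds $\delta_c\le g+e-1$ and $r_c\le g+e$ then give a uniform $m$. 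You correctly guessed that ``one must couple the conductor exact sequence with the $\delta$-bound,'' but guessing the right tool is not the same as running the argument, and without it the very-ampleness claim --- and hence the existence of the embedding of degree $\le D_{g,e}$ --- is unproven. (As a smaller point, your intermediate claim $p_a(C_i)\le 2(g+e-1)$ is neither justified nor needed once the normalization argument is in place.)
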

\begin{proof}
  We first determine $D_{g,e}$. Let $C_{\mathrm{sm}} \subset C$
  denote the smooth locus, then $C_{\mathrm{sm}}$ is a disjoint union
  $\amalg_{i=1}^e W_i$. For each $i=1$, $\dots$, $e$, take $p_i
  \in W_i$. Let $Z$ be the divisor $p_1 + \cdots + p_e$ and let
  $\mathscr{L} = \Orb(Z)$, then $\deg \Orb(Z) = e$. It suffices to
  find some $m=m(g,e)$ (depending only on $g$, $e$) such that
  $\mathscr{L}^{m}$ is very 
  ample. Indeed, we would then have $D_{g,e} = me$. We need to show that
  $\mathscr{L}^{m}$ separates points and tangent vectors. Thus, it
  remains to show that for any $c\in C$:
  \[
  H^1(C,\mathscr{L}^{m}(-c)) = H^1(C,\mathscr{L}^{m}(-2c)) = 0.
  \]
  Using the standard exact sequence relating these two ideal sheaves,
  the vanishing of the former is determined by the vanishing of the
  latter.{}

  Let $\pi : \widetilde{C} \to C$ be the
  normalization map and let $c \in C$. Take $c_1$, $\dots$, $c_{r_c}$ to
  denote the points of the fiber $\pi^{-1}(c)$ and let $\delta_c$
  be the $\delta$-invariant of $c$, then there is an exact
  sequence 
  \[
  \xymatrix{0 \ar[r] &
    \pi_*\Orb_{\widetilde{C}}(-2\delta_c(c_1+\cdots+c_{r_c})) \ar[r] &
    \Orb(-2c) \ar[r] & \mathscr{E} \ar[r] & 0},
  \]
  with $\mathscr{E}$ supported only on $c$. Twisting this exact
  sequence by $\mathscr{L}^m$ (for some $m$ yet to be determined) and
  taking the exact sequence of cohomology we obtain:
  \[
  \xymatrix{H^1(C,\mathscr{L}^m\tensor\pi_*
    \Orb_{\widetilde{C}}(-2\delta_c(c_1+\cdots+c_{r_c}))) \ar[r] &
    H^1(C,\mathscr{L}^m(-2c)) \ar[r] & 0
  }. 
  \]
  Since $\pi_*$ is exact, we obtain from the projection formula:
  \[
  H^1(C,\mathscr{L}^m\tensor\pi_*
    \Orb_{\widetilde{C}}(-2\delta_c(c_1+\cdots+c_{r_c}))) =
    H^1(\widetilde{C},
    (\pi^*\mathscr{L})^m(-2\delta_c(c_1+\cdots+c_{r_c}))). 
  \]
  Taking $m_c = 2r_c\delta_c + e$ and applying \cite[Cor.\ IV.3.3
  and Exercise III.7.1]{MR0463157} furnishes us with:
  \[
  H^1(\widetilde{C},(\pi^*\mathscr{L})^{m_c}(-2\delta_c
  (c_1+\cdots+c_{r_c})))  = 0.
  \]
  We may conclude that $H^1(C,\mathscr{L}^{m_c}(-2c)) = 0$.{}
  
  Note that $g = p_a(\widetilde{C}) + \sum_{c \in C} \delta_c$. In
  particular, since the number of connected components of
  $\widetilde{C}$ is bounded by the number of irreducible components
  of $C$, we have
  \[
  \sum_{c \in C} \delta_c \leq   g + e-1.
  \]
  Hence, the $\delta$s of the singular points of the $C$ are bounded
  by $g+e-1$   and by the proof of Lemma \ref{lem:bd_embd}, we see that
  the number of branches over each singular point is bounded by
  $g+e$. Hence, if 
  we take $m = 2(g+e-1)(g+e) + e$, then we are done and we have
  bounded the degree. Continuing with these ideas, an application of
  Lemma \ref{lem:bd_embd} implies that the embedding dimension of
  every singularity of $C$ is bounded by $(g+e)^2$.{} 

  Using our line bundle $\mathscr{L}^m$, we produce an embedding $C 
  \hookrightarrow \Pr^M_\Bbbk$ and let 
  $\mathrm{Sec}\,(C)$ be the secant variety of $C$, this has
  dimension bounded by 3. Take $\mathrm{Tan}\,(C)$ to denote the
  tangent variety, then this has dimension bounded by $(g+e)^2 + 1$
  by the above bound on embedding dimension.  By choosing a point $P$
  not in $\mathrm{Sec}\,(C) \cup \mathrm{Tan}\,(C)$ and projecting
  from it, we obtain $N_{g,e} \leq (g+e)^2+1$. Note that the degree of
  the embedding $C \hookrightarrow \Pr^{N_{g,e}}$ remains $\leq
  D_{g,e}$.
\end{proof}
\begin{proof}[Proof of Corollary \ref{cor:stack_of_curv3}]
  From the proof of Corollary \ref{cor:stack_of_curv1}, it suffices to
  show that $\mathfrak{W}_{g,e,0}$ is quasicompact. Let $W_{g,e}$ 
  denote the Hilbert scheme of curves in $\Pr^{N_{g,e}}$, whose fibers
  are embedded curves of arithmetic genus $g$, with less than $e$
  irreducible components and of degree $\leq D_{g,e}$. Note that
  $W_{g,e}$ is quasicompact, since the component of the Hilbert scheme
oh   corresponding to a fixed Hilbert polynomial is projective and the
  Hilbert polynomial of a curve is completely determined by its degree
  and genus.{} 

  Let $\mathscr{W}_{g,e}\to W_{g,e}$ denote the universal family, then
  we have an induced morphism $W_{g,e} \to \mathfrak{W}_{g,e,0}$. By
  Theorem \ref{thm:bd_deg}, this map is surjective. We conclude that
  $\mathfrak{W}_{g,e,0}$ is quasicompact.
\end{proof}
\bibliography{/Users/me/Bibliography/references}

\newcommand{\etalchar}[1]{$^{#1}$}
\providecommand{\bysame}{\leavevmode\hbox to3em{\hrulefill}\thinspace}
\providecommand{\MR}{\relax\ifhmode\unskip\space\fi MR }
\providecommand{\MRhref}[2]{%
  \href{http://www.ams.org/mathscinet-getitem?mr=#1}{#2}
}
\providecommand{\href}[2]{#2}
\begin{thebibliography}{SGA$4\tfrac{1}{2}$}

\bibitem[Art74]{MR0399094}
M.~Artin, \emph{Versal deformations and algebraic stacks}, Invent. Math.
  \textbf{27} (1974), 165--189.

\bibitem[dHS08]{dejong-2008}
A.~J. {de Jong}, X.~He, and J.~M. Starr, \emph{Families of rationally simply
  connected varieties over surfaces and torsors for semisimple groups}, 2008,
  \href{http://arXiv.org/abs/0809.5224}{\mbox{arXiv:0809.5224}}.

\bibitem[EGA]{EGA}
A.~Grothendieck, \emph{\'{E}l\'ements de g\'eom\'etrie alg\'ebrique}, I.H.E.S.
  Publ. Math. \textbf{4, 8, 11, 17, 20, 24, 28, 32} (1960, 1961, 1961, 1963,
  1964, 1965, 1966, 1967).

\bibitem[FGI{\etalchar{+}}05]{MR2222646}
B.~Fantechi, L.~G{\"o}ttsche, L.~Illusie, S.~L. Kleiman, N.~Nitsure, and
  A.~Vistoli, \emph{Fundamental algebraic geometry}, Mathematical Surveys and
  Monographs, vol. 123, American Mathematical Society, Providence, RI, 2005,
  Grothendieck's FGA explained.

\bibitem[Ful09]{fulghesu-2009}
D.~Fulghesu, \emph{The stack of rational nodal curves}, 2009,
  \href{http://arXiv.org/abs/0901.1201}{\mbox{arXiv:0901.1201}}.

\bibitem[Har77]{MR0463157}
R.~Hartshorne, \emph{Algebraic geometry}, Springer-Verlag, New York, 1977,
  Graduate Texts in Mathematics, No. 52.

\bibitem[Ill71]{MR0491680}
L.~Illusie, \emph{Complexe cotangent et d\'eformations. {I}}, Lecture Notes in
  Mathematics, Vol. 239, Springer-Verlag, Berlin, 1971.

\bibitem[Knu71]{MR0302647}
D.~Knutson, \emph{Algebraic spaces}, Lecture Notes in Mathematics, Vol. 203,
  Springer-Verlag, Berlin, 1971.

\bibitem[Kol08]{kollar-2008}
J.~Kollar, \emph{Quotients by finite equivalence relations}, 2008,
  \href{http://arXiv.org/abs/0812.3608}{\mbox{arXiv:0812.3608}}.

\bibitem[LMB]{MR1771927}
G.~Laumon and L.~Moret-Bailly, \emph{Champs alg\'ebriques}, Ergebnisse der
  Mathematik und ihrer Grenzgebiete. 3. Folge., vol.~39, Springer-Verlag,
  Berlin, 2000.

\bibitem[Lun09]{lundkvistthesis}
C.~Lundkvist, \emph{Moduli spaces of zero-dimensional geometric objects}, Ph.D.
  thesis, KTH, 2009.

\bibitem[Ray70]{MR0260758}
M.~Raynaud, \emph{Faisceaux amples sur les sch\'emas en groupes et les espaces
  homog\`enes}, Lecture Notes in Mathematics, Vol. 119, Springer-Verlag,
  Berlin, 1970.

\bibitem[Ser06]{MR2247603}
E.~Sernesi, \emph{Deformations of algebraic schemes}, Grundlehren der
  Mathematischen Wissenschaften [Fundamental Principles of Mathematical
  Sciences], vol. 334, Springer-Verlag, Berlin, 2006.

\bibitem[SGA$4\tfrac{1}{2}$]{MR0463174}
P.~Deligne, \emph{Cohomologie \'etale}, Lecture Notes in Mathematics, Vol. 569,
  Springer-Verlag, Berlin, 1977.

\bibitem[Smy09]{smyth-2009}
D.~I. Smyth, \emph{Towards a classification of modular compactifications of the
  moduli space of curves},
  \href{http://arXiv.org/abs/0902.3690}{\mbox{arXiv:0902.3690}}.

\bibitem[Vak06]{MR2227692}
R.~Vakil, \emph{Murphy's law in algebraic geometry: badly-behaved deformation
  spaces}, Invent. Math. \textbf{164} (2006), no.~3, 569--590.

\end{thebibliography}
\bibliographystyle{/Users/me/Bibliography/dary}

\end{document}